\newcommand{\diff}[1]{\, \mathrm{d}#1}
\DeclareMathOperator{\Dt}{D_t}
\DeclareMathOperator{\Dtt}{D_t^2}
\DeclareMathOperator{\trace}{tr}
\newcommand{\bA}{\mathbf{A}}
\newcommand{\bB}{\mathbf{B}}
\newcommand{\bH}{\mathbf{H}}
\newcommand{\bI}{\mathbf{I}}
\newcommand{\bK}{\mathbf{K}}
\newcommand{\bL}{\mathbf{L}}
\newcommand{\bM}{\mathbf{M}}
\newcommand{\bR}{\mathbf{R}}
\newcommand{\R}{\mathbb{R}}
\newcommand{\Vd}{\bm{V}}
\newcommand{\Sh}{S_h}
\newcommand{\Sdh}{\bm{S}_h}
\newcommand{\Chi}{\bm{\chi}}
\newcommand{\iprod}[2]{\left( #1, #2 \right)}
\newcommand{\norm}[1]{\lVert #1 \rVert}
\newcommand{\Bignorm}[1]{\Big\lVert #1 \Big\rVert}
\newcommand{\strain}[1]{\epsilon(#1)}
\newcommand{\ordo}{{\rm O}}
\newcommand{\tend}{T}
\newcommand{\eth}{e_{\theta}}
\newcommand{\ephi}{e_{\phi}}
\newcommand{\eu}{e_{u}}
\newcommand{\ethh}{e_{\theta,h}}
\newcommand{\ephih}{e_{\phi,h}}
\newcommand{\euh}{e_{u,h}}
\newcommand{\eh}{e_{h}}
\theoremstyle{theorem}
\newtheorem{assumption}{Assumption}[section]
\newtheorem{theorem}{Theorem}[section]
\theoremstyle{lemma}
\newtheorem{lemma}{Lemma}[section]
\theoremstyle{remark}
\newtheorem{remark}{Remark}[section]
\begin{document}

\date{Februart 20, 2017}

\title[FEM analysis for a thermoviscoelastic system]{Finite element convergence analysis for the thermoviscoelastic Joule heating problem }

\author{Axel M\r{a}lqvist}
\address{Mathematical Sciences\\Chalmers University of Technology and the University of Gothenburg\\,
     SE-412 96 Göteborg\\ Sweden.}
\email[A.~Målqvist]{axel@chalmers.se}

\author{Tony Stillfjord}
\email[T.~Stillfjord]{tony.stillfjord@gu.se}

\thanks{Both authors were supported by the Swedish Research Council under grant 2015-04964}

\begin{abstract}
  We consider a system of equations that model the temperature, electric potential and deformation of a thermoviscoelastic body. A typical application is a thermistor; an electrical component that can be used e.g.\ as a surge protector, temperature sensor or for very precise positioning. We introduce a full discretization based on standard finite elements in space and a semi-implicit Euler-type method in time. For this method we prove optimal convergence orders, i.e.\ second-order in space and first-order in time. The theoretical results are verified by several numerical experiments in two and three dimensions.
\end{abstract}

\keywords{Partial differential equations, thermoviscoelastic, Joule heating, thermistor, convergence analysis, finite elements}

\subjclass[2010]{65M12, 65M60, 74D05, 74H15}



\maketitle

\section{Introduction}
Consider the following system of coupled equations:
\begin{align}
  \label{eq:main_T}
    \dot{\theta} &= \Delta \theta + \sigma(\theta) | \nabla \phi|^2 -  \bM : \strain{\dot{u}}, \\
  \label{eq:main_phi}
   0 &= \nabla \cdot \big( \sigma(\theta) \nabla \phi \big), \\
  \label{eq:main_stress}
    \ddot{u} &= \nabla \cdot \big( \bA\strain{\dot{u}} + \bB\strain{u} - \bM \theta \big) + f, 
\end{align}
with initial conditions
\begin{equation*}
  \theta(0,x) = \theta_0(x), \quad u(0,x) = u_0(x) \quad \text{and} \quad \dot{u}(0,x) = v_0(x),
\end{equation*}
over the convex polygonal or polyhedral domain $\Omega \subset \R^d$ with $d \le 3$.
Together with appropriate boundary conditions, to be specified later, these equations describe the evolution of the temperature $\theta$, electric potential $\phi$ and deformation $u$ of a conducting body. Here $\bA$, $\bB$ are constant tensors of order 4, describing the viscosity and elasticity of the body, and $\bM$ is a constant matrix describing the thermal expansion of the body. The vector $f$ consists of external forces and $\sigma(\theta)$ denotes the electrical conductivity, which here depends on the temperature. In addition, we have used the notation 
\begin{equation*}
  \strain{u} = \frac{1}{2} \big( \nabla u + (\nabla u)^T \big)
\end{equation*}
for the linearized strain tensor and $:$ for the Frobenius inner product.

The coupling of electricity and temperature through~\eqref{eq:main_T}--\eqref{eq:main_phi} is commonly known as \emph{Joule heating} and is typically used to model thermistors, see e.g.~\cite{AntontsevChipot1994,Cimatti1992}. These are electrical components used for example as surge protectors or temperature sensors. The inclusion of thermoviscoelastic effects through~\eqref{eq:main_stress} allows us to also model their use as actuators on the micro-scale, cf.~\cite{Henneken_etal2006}. 

We note that the Joule heating problem, both stationary and time-dependent, has been considered extensively in different contexts. For discussions on existence and uniqueness, see e.g.~\cite{AllegrettoXie1992,AntontsevChipot1994,Chen1994_2,Cimatti1989,Cimatti1992,Holst_etal2010,Howison_etal1993,JensenMalqvist2013,LiYang2015,YuanLiu1994} and the references therein. For the fully coupled, deformable problem the literature is less extensive. We refer mainly to~\cite{Kuttler_etal2008} for the non-degenerate case that we consider here, with $\sigma \ge \sigma_{\text{min}} > 0$. See also~\cite{WuXu2006} for the degenerate case where $\sigma = 0$ is allowed; this requires a more generalized solution concept.

However, to our knowledge there exists no numerical analysis for methods applied to the fully coupled case. Many authors have analyzed methods for similar problems. For example, \cite{Fernandez2006} considers the quasi-static version where the $\ddot{u}$-term is ignored, \cite{AkrivisLarsson2005}, \cite{ElliottLarsson1995} and \cite{LiGaoSun2014} considers the non-deformable case, \cite{FernandezKuttler2010_2,FernandezKuttler2010} treat the purely thermoviscoelastic case (no $\phi$) with nonlinear constituent law, etc. Additionally, in the deformable case a common theme seems to be suboptimal convergence orders, i.e.\ errors of the form $\ordo(h+k)$ instead of $\ordo(h^2+k)$. 

The main contribution of this article is therefore an error analysis for a fully discrete discretization applied to the problem~\eqref{eq:main_T}--\eqref{eq:main_stress}, which shows optimal convergence orders in both time and space. For the spatial discretization we consider standard finite elements, and for the temporal discretization a semi-implicit Euler-type method. Our approach also allows us to analyze e.g.\ the implicit Euler method, but the semi-implicit method benefits from a greatly decreased computational cost while the errors are comparable.

The central idea of our proof is to bound the errors in $\phi$ and $\dot{u}$ in terms of the error in $\theta$, in the spirit of~\cite{ElliottLarsson1995} and~\cite{LiSun2013}. The latter error then fulfills an equation similar to~\eqref{eq:main_T}, to which we may apply a Grönwall inequality after properly handling the quadratic potential term. We note that we avoid any time step restrictions of the form $k \le h^{d/r}$ by performing the analysis in two steps, where the first considers only the discretization in time, cf.~\cite{LiSun2013}.
Finally, in order to produce the $\dot{u}$ error bound, we extend the concept of Ritz-Volterra projections for damped wave equations (see~\cite{LinThomeeWahlbin1991}) to the discrete and vector-valued viscoelasticity case.

For simplicity, we consider Dirichlet boundary conditions,
\begin{equation*}
  \theta(t,x) = 0, \quad  \phi(t,x) = \phi_b(t,x) \quad \text{and} \quad u(t,x) = 0 
\end{equation*}
for $t \in [0, \tend]$ and $x \in \partial \Omega$. This is a simplified case of the ideal situation with an arbitrary polygon and mixed boundary conditions, corresponding to where the body is clamped and insulated. As is well known (see e.g.~\cite{Grisvard1985}) the solutions to such a problem would typically suffer from a lack of regularity in the vicinity of re-entrant corners and boundary condition transitions, which leads to suboptimal convergence orders for finite-element based numerical methods. We therefore restrict ourselves to the simplified model, and will indicate possible generalizations by our numerical experiments. 

A brief outline of the article is as follows. In Section~\ref{sec:problem} we write the problem on weak form and discretize it in both time and space. The assumptions on the data and solutions to the continuous problem are given in Section~\ref{sec:error_analysis}, where we also perform the error analysis. In Subsection~\ref{subsec:timediscrete}, the time-discrete system is shown to be first-order convergent, and then the full discretization is shown to be second-order convergent to the time-discrete system in Subsection~\ref{subsec:fully_discrete}. These results are confirmed by the numerical experiments presented in Section~\ref{sec:experiments}, and conclusions and future work is summarized in Section~\ref{sec:conclusions}.

\section{Weak formulation and discretization} \label{sec:problem}
In order to present a weak formulation of the problem, we introduce the spaces
\begin{equation*}
  V := H^1_0(\Omega) \subset L^2(\Omega),  \quad  \text{and } \quad \Vd := H^1_0(\Omega)^d \subset L^2(\Omega)^d =: \bL^2(\Omega),
\end{equation*}
as well as the space of symmetric matrices,
\begin{equation*}
  Q = \{\xi = (\xi_{ij})_{i,j=1}^d \subset L^2(\Omega)^{d \times d} \; ; \; \xi_{ji} = \xi_{ij}, 1 \le i, j \le d \}.
\end{equation*}
 The idea here is that $\theta$ and $\phi-\phi_b$ belong to $V$, $u \in \Vd$ and $\strain{u} \in Q$. On $Q$, we have the inner product 
 \begin{equation*}
   \iprod{\xi}{\zeta}_Q := \int_{\Omega}{\xi(x) : \zeta(x) \diff{x}} = \sum_{i,j = 1}^d {\iprod{\xi_{ij}}{\zeta_{ij}}_{L^2(\Omega)}}.
 \end{equation*}
 which gives rise to the norm $\norm{\cdot}_Q$. To simplify some notation, we use the inner product
 \begin{equation*}
   \iprod{u}{v}_{\Vd} = \iprod{\strain{u}}{\strain{v}}_Q 
 \end{equation*}
 on $\Vd$ instead of the usual one. 
The norm $\norm{\cdot}_{\Vd}$ induced by this inner product is equivalent to $\norm{\cdot}_{H^1(\Omega)^d}$ by Korn's inequality, see e.g.~\cite[Chapter III, Theorems 3.1, 3.3]{DuvautLions1972} and~\cite{Nitsche1981}. 
We will on several occasions make use also of the norm $\norm{\cdot}_{\bB}$, which arises from the elasticity operator through
\begin{equation*}
  \norm{u}_{\bB}^2 = \iprod{\bB\strain{u}}{\strain{u}}_Q,
\end{equation*}
as well as the norm $\norm{\cdot}_{\bA + k\bB}$ defined analogously for a small positive constant $k$. Under Assumption~\ref{ass:viscoelasticity} in the next section, both of these norms are equivalent to the $\Vd$-norm.
In the following, we will omit the specification of $\Omega$ and simply write $L^2$ or $\bL^2$. Additionally, the $L^2$- and $\bL^2$-norms will both simply be denoted by $\norm{\cdot}$ and the corresponding inner products by $\iprod{\cdot}{\cdot}$, where no confusion can arise. 

By multiplying the equations~\eqref{eq:main_T},~\eqref{eq:main_phi} with the test function $\chi \in V$, Equation~\eqref{eq:main_stress} with $\Chi \in \Vd$ and then using Green's formula we get
\begin{align}
  \label{eq:weak_T}
    \iprod{\dot{\theta}}{\chi} + \iprod{\nabla \theta}{\nabla \chi} &= \iprod{\sigma(\theta) | \nabla \phi|^2}{\chi} - \iprod{ \bM : \strain{\dot{u}}}{\chi}, \\
  \label{eq:weak_phi}
    \iprod{\sigma(\theta) \nabla \phi}{\nabla \chi} &= 0,\\
  \label{eq:weak_stress}
    \iprod{\ddot{u}}{\Chi} + \iprod{\bA\strain{\dot{u}} + \bB\strain{u}}{\strain{\Chi}}_Q &= \iprod{\bM \theta }{\strain{\chi}}_Q + \iprod{f}{\Chi},
\end{align}
for all $\chi \in V$ and $\Chi \in \Vd$, respectively. In~\eqref{eq:weak_stress}, we have made use of the identity ${\iprod{\strain{u}}{\nabla v} = \iprod{\strain{u}}{\strain{v}}}$ as well as the similar identities ${\iprod{\bA\strain{u}}{\nabla v} = \iprod{\bA\strain{u}}{\strain{v}}}$ and ${\iprod{\bB\strain{u}}{\nabla v} = \iprod{\bB\strain{u}}{\strain{v}}}$. The latter two hold because we assume $\bA$ and $\bB$ to be symmetric; see Assumption~\ref{ass:viscoelasticity} in the next section. Note also that we have omitted the time parameter here and in the original equation; both are supposed to hold for all times $t \in (0,\tend]$ for a given $\tend$.

We now discretize the time interval $[0,\tend]$ using a constant temporal step size $k$, which results in the grid $t_n = nk$ with $n = 1, 2, \ldots, N$ and $Nk = \tend$. We will abbreviate function evaluations at these times by sub-scripts, so that 
\begin{equation*}
  \theta_n = \theta(t_n), \quad  \phi_n = \phi(t_n), \quad u_n = u(t_n)  \quad \text{and} \quad f_n = f(t_n).
\end{equation*}
 The approximations of these solution values should belong to the same spaces as in the continuous case, and we will denote them by capital letters and superscripts: 
\begin{equation*}
  \Theta^n \approx \theta_n, \quad \Phi^n \approx \phi_n \quad \text{and} \quad U^n \approx u_n .
\end{equation*}
Additionally, we denote by $\Dt$ the first-order backward difference quotient, i.e.\
\begin{equation*}
  \Dt \Theta^n = \frac{\Theta^n - \Theta^{n-1}}{k}.
\end{equation*}

With this notation given, we now consider the following semi-implicit temporal discretization of Equations~\eqref{eq:main_T}--\eqref{eq:main_stress}, 
\begin{align}
  \label{eq:semi_T}
    \Dt{\Theta^n} &= \Delta \Theta^n + \sigma(\Theta^{n-1}) | \nabla \Phi^{n-1}|^2 -  \bM : \strain{\Dt U^{n-1}}, \\
  \label{eq:semi_phi}
   0 &= \nabla \cdot \big( \sigma(\Theta^n) \nabla \Phi^n \big), \\
  \label{eq:semi_stress}
    \Dtt U^n &= \nabla \cdot \big( \bA\strain{\Dt U^n} + \bB\strain{U^n} - \bM \Theta^n \big) + f_n, 
\end{align}
where $\Dtt = \Dt\Dt$, and its corresponding weak form, 
\begin{align}
  \label{eq:semi_weak_T}
    &\iprod{\Dt \Theta^n}{\chi} + \iprod{\nabla \Theta^n}{\nabla \chi} = \iprod{\sigma(\Theta^{n-1}) | \nabla \Phi^{n-1}|^2}{\chi} - \iprod{ \bM : \strain{\Dt U^{n-1}}}{\chi}, \\
  \label{eq:semi_weak_phi}
    & \hspace{2.2cm} \iprod{\sigma(\Theta^n) \nabla \Phi^n}{\nabla \chi} = 0,\\
  \label{eq:semi_weak_stress}
    &\iprod{\Dtt U^n}{\Chi} + \iprod{\bA\strain{\Dt U^n} + \bB\strain{U^n}}{\strain{\Chi}}_Q = \iprod{\bM \Theta^n}{\strain{\chi}}_Q + \iprod{f_n}{\Chi},
\end{align}
for $n = 1, \ldots, N$ and for all $\chi \in \Sh$ and $\Chi \in \Sdh$, respectively. The initial conditions are the same as in the continuous case: $\Theta^0 = \theta_0$, $U^0 = u_0$ and $\Dt U^0 = v_0$. (We use a fictitious point $U^{-1}$ to define $\Dt U^0$.)
Note that this discretization results in a decoupling of the equations; we solve first for $\Theta^n$ using~\eqref{eq:semi_T} then use this to find $\Phi^n$ from~\eqref{eq:semi_phi} and $U^n$ from~\eqref{eq:semi_stress}. This implies a significant decrease in computational effort compared to the fully coupled case arising from e.g.\ the implicit Euler discretization.

For the spatial discretization, we introduce the finite element spaces $\Sh \subset V$ and $\Sdh \subset \Vd$. These consist of continuous, piecewise linear functions with zero trace on $\partial \Omega$, defined on a quasi-uniform mesh with mesh-width $h$.
Then the fully discrete problem we are interested in is given by
\begin{align}
  \label{eq:full_T}
    &\iprod{\Dt \Theta_h^n}{\chi} + \iprod{\nabla \Theta_h^n}{\nabla \chi} = \iprod{\sigma(\Theta_h^{n-1}) | \nabla \Phi_h^{n-1}|^2}{\chi} - \iprod{ \bM : \strain{\Dt U_h^{n-1}}}{\chi}, \\
  \label{eq:full_phi}
    & \hspace{2.2cm} \iprod{\sigma(\Theta_h^n) \nabla \Phi_h^n}{\nabla \chi} = 0,\\
  \label{eq:full_stress}
    &\iprod{\Dtt U_h^n}{\Chi} + \iprod{\bA\strain{\Dt U_h^n} + \bB\strain{U_h^n}}{\strain{\Chi}}_Q = \iprod{\bM \Theta_h^n}{\strain{\chi}}_Q + \iprod{f_n}{\Chi},
\end{align}
for $n = 1, \ldots, N$ and for all $\chi \in \Sh$ and $\Chi \in \Sdh$, respectively. Here, the approximations satisfy $\Theta_h^n \in \Sh$, $\Phi_h^n - \phi_b(t_n) \in \Sh$ and $U_h^n \in \Sdh$. (We assume that $\phi_b(t_n)$ is defined on all of $\Omega$.)
As initial conditions, we take $U_h^0 = 0$, $\Dt U_h^0 = 0$ and $\Theta_h^0 = I_h\theta_0$, the Lagrangian interpolant of the exact initial condition.

\begin{remark}
  We assume the domain to be a convex polygon or polyhedron in order that the standard interpolation and regularity estimates for linear elliptic problems are satisfied, see~\cite[Section 3.2]{Ciarlet2002}. Similarly, the quasi-uniformity of the mesh guarantees that the standard inverse inequalities are satisfied. These are needed to handle the nonlinear potential term in~\eqref{eq:main_T}, see~\cite{ElliottLarsson1995,LiSun2013}.
\end{remark}

\section{Error analysis} \label{sec:error_analysis}
Our main goal is to estimate the errors $\norm{\Theta_h^n - \theta_n}$, $\norm{\Phi_h^n - \phi_n}$ and $\norm{U_h^n - u_n}$.
In order to do this, we will generalize the analysis of \cite{LiSun2013} (cf.\ also~\cite{ElliottLarsson1995}) for the case with no deformation. This consists of first showing that the time-discrete approximations are $\ordo(k)$-close to the solutions of the continuous system, and also proving that these approximations exhibit a certain regularity. The key part here is to express the error in the potential in terms of the error in the temperature, and then only working with the temperature equation. With the given regularity, the time-discrete and fully discrete approximations can then be compared and shown to be $\ordo(h^2)$-close. The main problem here is the nonlinear term $\sigma(\theta) |\nabla \phi|^2$, which is handled in a two-step fashion: first using that $\norm{\nabla (\Phi_h^n - \Phi^n)} \le C(h + \norm{\Theta_h^n - \Theta^n})$ to show that in fact $\norm{\nabla (\Phi_h^n - \Phi^n)} \le Ch$ and then using this to estimate ${\nabla (\Phi_h^n - \Phi^n)}$ in a stronger norm.

In our case, the temperature equation~\eqref{eq:main_T} contains the extra term $\bM : \strain{\dot{u}}$, so our idea is to also bound the error in $\dot{u}$ by the error in the temperature. Then we show that the approximations $U^n$ possess certain regularity, which may be used to also express the fully discrete deformation errors in terms of the fully discrete temperature errors. The key part in the latter step is to utilize the concept of Ritz-Volterra projections~\cite{LinThomeeWahlbin1991}, which we here generalize to the vector-valued viscoelasticity case, as well as to discrete time.

Before we perform this extended analysis, we state the general assumptions on the given data. In these, as well as throughout the rest of the paper, $C$ denotes a generic constant independent of $k$, $h$ and $n$ but possibly depending on $\tend$, that may differ from line to line.

\begin{assumption}\label{ass:viscoelasticity}
  The viscosity and elasticity tensors $\bA = (a_{ijkl})$ and $\bB = (b_{ijkl})$ are symmetric, and both yield Lipschitz continuous and strongly coercive bilinear forms.  That is, 
  \begin{equation*}
    a_{ijkl} = a_{jikl} = a_{klij}, \qquad b_{ijkl} = b_{jikl} = b_{klij},
  \end{equation*}
and there are positive constants $C_1, C_2$ such that for all $u, v \in \Vd$ we have
  \begin{align*}
    \max \Big( \iprod{\bA \strain{u}}{\strain{v}}_Q , \iprod{\bB \strain{u}}{\strain{v}}_Q \Big) &\le C_1 \norm{u}_{\Vd}\norm{v}_{\Vd} \quad \text{and}\\
   \min \Big( \iprod{\bA \strain{u}}{\strain{u}}_Q, \iprod{\bB \strain{u}}{\strain{u}}_Q \Big) &\ge C_2 \norm{u}_{\Vd}^2. 
  \end{align*}
\end{assumption}

\begin{assumption}\label{ass:conductivity}
  The electrical conductivity $\sigma$ belongs to $C^1(\R)$ and there are positive constants $\sigma_{\text{min}}$, $\sigma_{\text{max}}$ and $\sigma_{\text{max}}'$ such that for all $\theta \ge 0$ we have
  \begin{equation*}
    0 < \sigma_{\text{min}} \le \sigma(\theta)\le \sigma_{\text{max}} \quad \text{and} \quad |\sigma'(\theta)| \le \sigma_{\text{max}}'.
  \end{equation*}
\end{assumption}

\begin{assumption}\label{ass:data}
The function $f \in C(0, \, \tend; \, \bL^2)$, $\theta_0 \in H^2 \cap H^1_0$ and $\phi_b \in L^{\infty}(0, T; L^2)$ is regular enough that
\begin{equation*}
  \norm{\phi_b}_{L^{\infty}(0, \, \tend; \, W^{2, 12/5})} + \norm{\dot{\phi}_b}_{L^{2}(0, \, \tend; \, H^1)} + \norm{\nabla \phi_b}_{L^{\infty}(0, \, \tend; \, L^\infty)} \le C.
\end{equation*}
\end{assumption}

By~\cite{Kuttler_etal2008}, these assumptions guarantee the existence of a weak solution to the problem, i.e functions $(\theta, \phi, u)$ satisfying~\eqref{eq:weak_T}--\eqref{eq:weak_stress} with the time derivatives interpreted in a weak sense. Thus for example $\theta \in L^2(0, \tend; V)$ and $\dot{\theta} \in L^2(0, \tend; V)'$. For optimal convergence orders more regularity is required, and explicit conditions on the data that guarantees such regularity is currently unknown. We therefore also make the following regularity assumption, where $\bH^2 = H^2(\Omega)^d$:

\begin{assumption} \label{ass:regularity}
  There exist solutions $(\theta, \phi, u)$ to~\eqref{eq:weak_T}--\eqref{eq:weak_stress} over the time interval $[0, \tend]$ which are regular enough that
  \begin{align*}
    \norm{\theta}_{L^{\infty}(0, \, \tend; \, H^2)} + \norm{\dot{\theta}}_{L^{\infty}(0, \, \tend; \, L^2)} + \norm{\dot{\theta}}_{L^{2}(0, \,  \tend; \, H^2)} + \norm{\ddot{\theta}}_{L^{1}(0, \, \tend; \, L^2)} &\le C, \\
    \norm{\phi}_{L^{\infty}(0, \, \tend; \, W^{2,12/5})} + \norm{\dot{\phi}}_{L^{2}(0, \, \tend; \, H^1)} + \norm{\phi}_{L^{\infty}(0, \, \tend; \, W^{1,\infty})} &\le C, \\
    \norm{\dot{u}}_{L^{\infty}(0, \, \tend; \, \bH^2)} + \norm{\ddot{u}}_{L^{\infty}(0, \, \tend; \, \bH^2 )} + \norm{u^{(3)}}_{L^{1}(0, \, \tend; \, \bL^2)}&\le C
  \end{align*}
\end{assumption}
The assumptions on $\theta$ and $\phi$ are essentially the same as in the non-deformable situation given in~\cite{LiSun2013}, while the assumptions on $u$ and $f$ are new. 
We note that for the non-deformable case, the existence of solutions with similar regularity properties was shown in \cite{ElliottLarsson1995} when $d \le 2$, with weak requirements on the initial values. In the general elliptic/parabolic case, the absence of reentrant corners in the convex domain makes such regularity plausible, see e.g.~\cite[Chapters 3,4]{Grisvard1985} and~\cite[Chapter 19]{Thomee2006}. In the displacement equation the viscosity term acts as damping, and we expect regular solutions to be present also there, see e.g.~\cite{LarssonThomeeWahlbin1991}. We are not aware of any regularity results for the fully coupled system, but 
we note that our numerical experiments with smooth data suggest that Assumption~\ref{ass:regularity} is satisfied in practice.

The following main theorem will be proved in the next two subsections:
\begin{theorem} \label{theorem:main}
  Let Assumptions~\ref{ass:viscoelasticity}-\ref{ass:regularity} be satisfied and let $(\theta, \phi, u)$ and $(\Theta_h^n, \Phi_h^n, U_h^n)$ be solutions to the equations~\eqref{eq:weak_T}--\eqref{eq:weak_stress} and~\eqref{eq:full_T}--\eqref{eq:full_stress}, respectively. Then there are positive constants $k_0$ and $h_0$ such that if $k < k_0$ and $h < h_0$ we have for $n = 1, \ldots, N$ that
  \begin{equation*}
    \norm{\Theta_h^n - \theta_n} +  \norm{\Phi_h^n - \phi_n} + \norm{\Dt U_h^n - \dot{u}_n} \le C(h^2 + k),
  \end{equation*}
and
\begin{equation*}
  \norm{\Theta_h^n - \theta_n}_{H^1} + \norm{\Phi_h^n - \phi_n}_{H^1} + \norm{\Dt U_h^n - \dot{u}_n}_{\Vd} \le C(h + k).
\end{equation*}
The constant $C$ is independent of $k$, $h$ and $n$, but may depend on the final time $\tend = Nk$ and the problem data.
\end{theorem}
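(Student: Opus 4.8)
The plan is to prove this by a triangle-inequality splitting of the error into a time-discretization error and a space-discretization error, treated in the two subsections announced in the text. That is, I would introduce the intermediate time-discrete (but spatially continuous) solution $(\Theta^n, \Phi^n, U^n)$ satisfying~\eqref{eq:semi_weak_T}--\eqref{eq:semi_weak_stress}, and write
\begin{equation*}
  \norm{\Theta_h^n - \theta_n} \le \norm{\Theta^n - \theta_n} + \norm{\Theta_h^n - \Theta^n},
\end{equation*}
and analogously for $\Phi$ and $\Dt U$. The first term is bounded by $Ck$ (the temporal estimate of Subsection~\ref{subsec:timediscrete}) and the second by $Ch^2$ (the spatial estimate of Subsection~\ref{subsec:fully_discrete}); summing then yields the $\ordo(h^2+k)$ bound, and the same scheme with the stronger norms yields the $\ordo(h+k)$ bound. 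Performing the analysis in these two separate steps is precisely what avoids any coupling between $k$ and $h$, so no restriction of the form $k \le h^{d/r}$ is needed.

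For the temporal step I would first set up error equations for $\eth^n = \Theta^n - \theta_n$ etc.\ by subtracting~\eqref{eq:semi_weak_T}--\eqref{eq:semi_weak_stress} from~\eqref{eq:weak_T}--\eqref{eq:weak_stress} evaluated at $t_n$, absorbing the consistency errors (the difference between $\Dt$ and $\dot{(\cdot)}$, and the semi-implicit lagging to $t_{n-1}$) into $\ordo(k)$ remainders controlled by the regularity of Assumption~\ref{ass:regularity}. The central device, following~\cite{ElliottLarsson1995,LiSun2013}, is to express the potential error in terms of the temperature error: testing the $\phi$-equation and using the uniform bounds on $\sigma$ from Assumption~\ref{ass:conductivity} together with $|\sigma'|\le\sigma_{\max}'$ gives $\norm{\nabla(\Phi^n-\phi_n)} \le C\norm{\eth^n}$ up to an $\ordo(k)$ term. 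Similarly I would bound the deformation-velocity error $\norm{\Dt U^n - \dot u_n}_{\Vd}$ by $\norm{\eth^n}$ using the viscoelastic coercivity of Assumption~\ref{ass:viscoelasticity} as damping; here I would test~\eqref{eq:semi_weak_stress} with $\Chi = \Dt U^n - \dot u_n$ (or a suitable energy-type quantity) so that the $\bA$-term produces a dissipative contribution. Feeding both bounds back into the temperature error equation leaves a single equation for $\eth^n$ to which a discrete Grönwall inequality applies, after the quadratic term $\sigma(\theta)|\nabla\phi|^2$ is linearized and its difference controlled by the $W^{1,\infty}$-bound on $\phi$.

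For the spatial step I would compare the fully discrete solution to the time-discrete one, now at a fixed time level. The new ingredient, as announced, is a discrete, vector-valued generalization of the Ritz--Volterra projection of~\cite{LinThomeeWahlbin1991} adapted to the viscoelastic operator $\iprod{\bA\strain{\Dt\,\cdot}+\bB\strain{\cdot}}{\strain{\cdot}}_Q$; I would define this projection so that it annihilates the leading elliptic error and yields optimal $\ordo(h^2)$ $\bL^2$-bounds and $\ordo(h)$ $\Vd$-bounds on $\Dt U_h^n - \Dt U^n$ in terms of the regularity of $u$ assumed in Assumption~\ref{ass:regularity}. The potential error and temperature error at the discrete level are handled by the same substitution strategy, crucially using the two-step bootstrap for $\nabla(\Phi_h^n-\Phi^n)$: first the crude estimate $\norm{\nabla(\Phi_h^n-\Phi^n)}\le C(h+\norm{\Theta_h^n-\Theta^n})$, then upgrading it to $\ordo(h)$, and finally estimating it in a stronger norm so that the quadratic nonlinearity can be absorbed; the inverse inequalities guaranteed by mesh quasi-uniformity are what make this absorption possible. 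A second discrete Grönwall argument then closes the spatial estimate.

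The hard part will be the spatial treatment of the quadratic Joule term together with the deformation coupling. Controlling $\sigma(\Theta_h^{n-1})|\nabla\Phi_h^{n-1}|^2 - \sigma(\Theta^{n-1})|\nabla\Phi^{n-1}|^2$ requires the $\ordo(h)$ bound on $\nabla(\Phi_h^n-\Phi^n)$ in a norm strong enough to pair against the test function without incurring a factor that destroys optimality, and this is where the interplay between the inverse inequalities, the $W^{1,\infty}$-regularity of $\phi$, and the $\bH^2$-regularity of $\dot u$ must be balanced carefully. Extending the Ritz--Volterra projection to the discrete-time, vector-valued setting so that it remains compatible with the backward-difference operator $\Dt$ in~\eqref{eq:full_stress}, while still delivering the optimal orders, is the technically most delicate construction and I expect it to be the main obstacle.
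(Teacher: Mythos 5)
Your overall architecture is exactly that of the paper: the triangle-inequality split through the time-discrete solution $(\Theta^n,\Phi^n,U^n)$, a temporal estimate obtained by bounding the $\phi$- and $\dot{u}$-errors in terms of the temperature error (this is Lemma~\ref{lemma:DtU_bound} and the bound $\norm{\nabla\ephi^n}\le C\norm{\eth^n}$) followed by a Grönwall argument, and a spatial estimate built on a discrete, vector-valued Ritz--Volterra projection together with the two-step bootstrap for the potential (Lemma~\ref{lemma:RitzVolterra} and Theorem~\ref{theorem:full_error}). The final assembly of Theorem~\ref{theorem:main} is then the same triangle-inequality argument you describe.

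There is, however, one genuine omission in your plan for the spatial step. You state that the Ritz--Volterra projection errors (and, implicitly, the bootstrap estimates for $\Phi_h^n-\Phi^n$ and $\Theta_h^n-\Theta^n$) are controlled ``in terms of the regularity of $u$ assumed in Assumption~\ref{ass:regularity}''. They are not: the object being projected is the \emph{time-discrete} solution $U^n$, so the projection error bounds scale with quantities such as $\norm{\Dt U^n}_{\bH^2}$ and $k\sum_{j}\norm{\Dtt U^j}_{\bH^2}^2$, and the potential/temperature bootstrap needs $\norm{\Theta^n}_{H^2}$, $\norm{\Phi^n}_{W^{1,\infty}}$ and $k\sum_{j}\norm{\Dt \Theta^j}_{H^2}^2$ --- all regularity properties of the time-discrete iterates, which do not follow from Assumption~\ref{ass:regularity} but must be proven. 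This is precisely why the paper's Theorem~\ref{theorem:time_error} contains, besides the $\ordo(k)$ error bound, an explicit discrete regularity statement, whose proof (testing the temperature error equation with $-\Delta\eth^n$, and the argument multiplying~\eqref{eq:euH2} by $\nabla \cdot \big( (\bA + k\bB)\strain{\Dt\eu^n} \big)$ followed by Grönwall for the displacement) is a substantial piece of work. Your plan would stall at the spatial step until this ingredient is added; once it is, your proof coincides with the paper's.
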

To abbreviate expressions like the above in the following, we introduce 
\begin{equation*}
  \eth^n = \Theta^n - \theta_n, \quad \ephi^n = \Phi^n - \phi_n \quad \text{and} \quad \eu^n = U^n - u_n
\end{equation*}
as well as
\begin{equation*}
  \ethh^n = \Theta_h^n - \Theta^n, \quad \ephih^n = \Phi_h^n - \Phi^n \quad \text{and} \quad \euh^n = U_h^n - U^n.
\end{equation*}

\subsection{The time-discrete case} \label{subsec:timediscrete}
We start by considering the semi-discrete case, and first provide a bound for $\Dt \eu^n$ in terms of $\eth^n$. 

\begin{lemma} \label{lemma:DtU_bound}
Let Assumptions~\ref{ass:viscoelasticity}-\ref{ass:regularity} be satisfied and let $(\theta, \phi, u)$ and $(\Theta^n, \Phi^n, U^n)$ be solutions to the equations~\eqref{eq:weak_T}--\eqref{eq:weak_stress} and~\eqref{eq:semi_weak_T}--\eqref{eq:semi_weak_stress}, respectively. Then we have
  \begin{equation*} 
    \norm{\Dt \eu^n}^2 + \norm{\eu^n}_{\Vd}^2 + k \sum_{j=1}^{n}{\norm{\Dt \eu^j}_{\Vd}^2 } \le Ck^2 + Ck \sum_{j=1}^{n}{\norm{\eth^j}^2 },
  \end{equation*}
for $n = 1, \ldots, N$, with the constant $C$ independent of $k$ and $n$.
\end{lemma}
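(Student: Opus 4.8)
The plan is to derive an error equation for $\eu^n = U^n - u_n$ and analyze it by an energy argument of damped-wave type, testing with the velocity-like quantity $\Dt \eu^n$. First I would subtract the continuous weak equation~\eqref{eq:weak_stress}, evaluated at $t = t_n$, from the time-discrete equation~\eqref{eq:semi_weak_stress}. Writing $\Dtt U^n = \Dtt \eu^n + \Dtt u_n$, $\strain{\Dt U^n} = \strain{\Dt \eu^n} + \strain{\Dt u_n}$ and $\strain{U^n} = \strain{\eu^n} + \strain{u_n}$, and using that the elasticity contribution $\iprod{\bB\strain{u_n}}{\strain{\Chi}}_Q$ cancels exactly (the scheme uses the undifferentiated $U^n$), I obtain for all $\Chi \in \Vd$
\[
  \iprod{\Dtt \eu^n}{\Chi} + \iprod{\bA\strain{\Dt \eu^n} + \bB\strain{\eu^n}}{\strain{\Chi}}_Q = \iprod{\bM \eth^n}{\strain{\Chi}}_Q - \iprod{\tau_n}{\Chi} - \iprod{\bA\strain{\rho_n}}{\strain{\Chi}}_Q ,
\]
where $\tau_n = \Dtt u_n - \ddot{u}_n$ and $\rho_n = \Dt u_n - \dot{u}_n$ are the temporal consistency errors and the coupling to temperature enters only through the first term on the right.

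Next I would set $\Chi = \Dt \eu^n$. The acceleration term, via the identity $\iprod{a}{a-b} = \tfrac12(\norm{a}^2 - \norm{b}^2 + \norm{a-b}^2)$ applied to $\Dtt \eu^n = (\Dt \eu^n - \Dt \eu^{n-1})/k$, produces a telescoping contribution $\tfrac{1}{2k}(\norm{\Dt \eu^n}^2 - \norm{\Dt \eu^{n-1}}^2)$ plus a nonnegative remainder; the same polarization identity in the $\bB$-inner product turns $\iprod{\bB\strain{\eu^n}}{\strain{\Dt \eu^n}}_Q$ into $\tfrac{1}{2k}(\norm{\eu^n}_\bB^2 - \norm{\eu^{n-1}}_\bB^2)$ plus a nonnegative remainder, while coercivity (Assumption~\ref{ass:viscoelasticity}) gives $\iprod{\bA\strain{\Dt \eu^n}}{\strain{\Dt \eu^n}}_Q \ge C_2\norm{\Dt \eu^n}_\Vd^2$. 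Multiplying by $k$, summing over $n = 1, \ldots, m$, and using $\eu^0 = 0$ and $\Dt \eu^0 = 0$ from the initial conditions, I am left with
\[
  \tfrac12\norm{\Dt \eu^m}^2 + \tfrac12\norm{\eu^m}_\bB^2 + C_2 k\sum_{n=1}^m \norm{\Dt \eu^n}_\Vd^2 \le \sum_{n=1}^m k\Big( \iprod{\bM \eth^n}{\strain{\Dt \eu^n}}_Q - \iprod{\tau_n}{\Dt \eu^n} - \iprod{\bA\strain{\rho_n}}{\strain{\Dt \eu^n}}_Q \Big).
\]

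It then remains to bound the three sums. By Young's inequality the coupling term is $\tfrac{C_2 k}{4}\norm{\Dt \eu^n}_\Vd^2 + Ck\norm{\eth^n}^2$, the first part being absorbed into the coercivity sum and the second producing exactly the target $Ck\sum_n\norm{\eth^n}^2$ (with no feedback, so no Grönwall step is needed here). For $\rho_n$ the Taylor bound $\norm{\rho_n}_\Vd \le \int_{t_{n-1}}^{t_n}\norm{\ddot{u}(s)}_\Vd \ds \le Ck$, using $\ddot u \in L^\infty(0,\tend;\bH^2)$, lets a further Young step absorb a $\norm{\Dt \eu^n}_\Vd^2$ part and leaves $Ck\sum_n\norm{\rho_n}_\Vd^2 = \ordo(k^2)$.

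The main obstacle is the second-order consistency term $\tau_n = \Dtt u_n - \ddot u_n$, since Assumption~\ref{ass:regularity} only provides $u^{(3)} \in L^1(0,\tend;\bL^2)$. A Taylor expansion gives $\norm{\tau_n} \le C\int_{t_{n-2}}^{t_n}\norm{u^{(3)}(s)}\ds$, so that $\sum_n k\norm{\tau_n} \le Ck\norm{u^{(3)}}_{L^1(0,\tend;\bL^2)} = \ordo(k)$, but the naive split $\tfrac k2\norm{\tau_n}^2 + \tfrac k2\norm{\Dt \eu^n}^2$ would only give an $\ordo(k)$ bound, as $\norm{\tau_n}^2$ has no pointwise control. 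To circumvent this I would instead estimate $\sum_n k\,\iprod{\tau_n}{\Dt \eu^n} \le \big(\max_{1\le j\le m}\norm{\Dt \eu^j}\big)\sum_n k\norm{\tau_n} \le Ck\max_{1\le j\le m}\norm{\Dt \eu^j}$. Setting $\mathcal G_m = \max_{1\le p\le m}\big(\tfrac12\norm{\Dt \eu^p}^2 + \tfrac12\norm{\eu^p}_\bB^2\big)$, I drop the coercivity sum, take the maximum of the summed inequality over $p \le m$ (the right-hand side being nondecreasing in $p$), and apply Young's inequality to $Ck(2\mathcal G_m)^{1/2}$ to absorb $\tfrac12\mathcal G_m$, obtaining $\mathcal G_m \le Ck^2 + Ck\sum_n\norm{\eth^n}^2$. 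Feeding this back into the full inequality recovers the same bound for $k\sum_n\norm{\Dt \eu^n}_\Vd^2$, and coercivity of $\norm{\cdot}_\bB$ converts $\norm{\eu^m}_\bB^2$ into $\norm{\eu^m}_\Vd^2$, yielding the claim. The one remaining technicality is the fictitious point $U^{-1}$ and the first step $n=1$, which I would treat separately using the initial data.
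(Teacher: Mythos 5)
Your proof is correct and follows the same skeleton as the paper's: subtract~\eqref{eq:weak_stress} from~\eqref{eq:semi_weak_stress}, test the resulting error equation with $\Dt \eu^n$, use the telescoping identities for $\iprod{\Dtt \eu^n}{\Dt \eu^n}$ and $\iprod{\bB\strain{\eu^n}}{\strain{\Dt \eu^n}}_Q$ together with coercivity of $\bA$, absorb the velocity terms via Young's inequality, sum, and pass from the $\bB$-norm to the $\Vd$-norm; like the paper, you correctly note that no Gr\"onwall step is needed since the temperature terms are simply carried along. The one genuine divergence is your treatment of the consistency term $\tau_n = \Dtt u_n - \ddot u_n$. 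The paper asserts the pointwise bound $\norm{\ddot u(t_n) - \Dtt u(t_n)} \le Ck$ ``due to the regularity assumptions,'' which strictly speaking would require $u^{(3)}$ bounded in $L^\infty(0,\tend;\bL^2)$, whereas Assumption~\ref{ass:regularity} only provides $u^{(3)} \in L^1(0,\tend;\bL^2)$, giving merely $\norm{\tau_n} \le C\int_{t_{n-2}}^{t_n}\norm{u^{(3)}(s)}\ds$ and hence an $\ell^1$-in-$n$ bound $k\sum_n\norm{\tau_n} \le Ck$. Your workaround --- estimating $\sum_n k\iprod{\tau_n}{\Dt\eu^n} \le Ck\max_j\norm{\Dt\eu^j}$, passing to the running maximum $\mathcal{G}_m$ of the energy, absorbing $\tfrac12\mathcal{G}_m$ by Young, and then feeding the result back to recover the dissipation sum --- closes the argument under the regularity actually assumed, at the cost of some extra bookkeeping that the paper avoids by its (implicit) stronger pointwise claim. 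So at this one point your argument is the more careful of the two. Both treatments leave the $n=1$/fictitious-point initialization informal, but you at least flag it explicitly.
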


\begin{proof}
By equations~\eqref{eq:weak_stress} and~\eqref{eq:semi_weak_stress}, we see that the error $\eu^n$ satisfies
\begin{align*}
  \iprod{\Dtt \eu^n}{\Chi} + \iprod{\bA \strain{\Dt\eu^n} + \bB \strain{\eu^n}}{\strain{\Chi}} &=  \iprod{\bM \eth^n}{\strain{\Chi}} + \iprod{\ddot{u}(t_n) - \Dtt u(t_n)}{\Chi} \\
 &+  \iprod{\bA \strain{\dot{u}(t_n) - \Dt u(t_n)}}{\strain{\Chi}}\\
  &\le C\norm{\eth^n} \norm{\Chi}_{\Vd} + Ck\norm{\Chi} + Ck\norm{\Chi}_{\Vd} 
\end{align*}
due to the regularity assumptions on $u$.
We note that for any sequence $\{g^n\}$ we have 
\begin{equation*}
2\iprod{\Dt^2 g^n}{\Dt g^n} \ge \Dt \norm{\Dt g^n}^2  \quad \text{and} \quad 2\iprod{\bB \strain{g^n}}{\strain{\Dt g^n}} \ge \Dt \norm{g^n}_{\bB}^2,
\end{equation*}
where $\norm{\cdot}_{\bB}$ is the norm induced by the inner product $\iprod{\bB \strain{\cdot}}{\strain{\cdot}}$. Thus by choosing $\Chi = \Dt \eu^n$ and using the Cauchy--Schwarz inequality as well as Young's inequality, $ab \le \frac{1}{2c}a^2 + \frac{c}{2}b^2$, we get
\begin{equation*}
  \Dt \norm{\Dt \eu^n}^2 +  2C_2 \norm{\Dt \eu^n}_{\Vd} + \Dt \norm{\eu^n}_{\bB}^2\le Ck^2 + C\norm{\eth^n}^2 + C_2\norm{\Dt \eu^n}_{\Vd}^2 .
\end{equation*}
Canceling the final term, summing over $n$ and modifying the constants then yields
\begin{equation*}
  \norm{\Dt \eu^n}^2 +  k\sum_{j=1}^{n}{ \norm{\Dt \eu^j}_{\Vd}} + \norm{\eu^n}_{\bB}^2\le Ck^2 + Ck\sum_{j=1}^{n}{\norm{\eth^j}^2},
\end{equation*}
and the Lemma follows from the equivalence between the $\bB$- and $\Vd$-norms.

\end{proof}

\begin{theorem} \label{theorem:time_error}
  Let Assumptions~\ref{ass:viscoelasticity}-\ref{ass:regularity} be satisfied and let $(\theta, \phi, u)$ and $(\Theta^n, \Phi^n, U^n)$ be solutions to the equations~\eqref{eq:main_T}--\eqref{eq:main_stress} and~\eqref{eq:semi_T}--\eqref{eq:semi_stress}, respectively. Then there is a positive constant $k_0$ such that if $k < k_0$ then
  \begin{equation*}
    \norm{\eth^n}_{H^1}^2 + \norm{\ephi^n}_{H^1}^2 + \norm{\Dt \eu^n}_{\Vd}^2 \le Ck^2,
  \end{equation*}
for $n = 1, \ldots, N$, with the constant $C$ independent of $k$ and $n$. In addition, the approximations have the following regularity:
\begin{align*}
  \norm{\Theta^n}_{H^2}^2 + \norm{\Dt \Theta^n}^2 + k\sum_{j=1}^{n}{\norm{\Dt \Theta^j}_{H^2}^2} &\le C,\\
\norm{\Phi^n}_{W^{2, 12/5}} + \norm{\Phi^n}_{W^{1,\infty}} &\le C, \\
\norm{\Dt U^n}_{\bH^2}^2 + \norm{\Dtt U^n}_{\Vd}^2 + k\sum_{j=1}^{n}{\norm{\Dtt U^j}_{\bH^2}^2} &\le C.
\end{align*}
\end{theorem}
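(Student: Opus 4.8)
The plan is to establish both the error bound and the regularity statement together, by a single induction on $n$: the estimate for $\eth^n$ at step $n$ will require the regularity (in particular the $W^{1,\infty}$-bound) of $\Phi^{n-1}$, while the regularity at step $n$ will in turn require the error bound just obtained. Since no spatial discretization is yet present, the time-discrete iterates are genuine functions on $\Omega$, and the only machinery needed is elliptic (and elliptic-system) regularity on the convex domain $\Omega$, the Sobolev embeddings valid for $d \le 3$, Korn's inequality, Lemma~\ref{lemma:DtU_bound}, and the discrete Grönwall inequality. As a first reduction, subtracting~\eqref{eq:semi_weak_phi} from~\eqref{eq:weak_phi} at $t_n$ shows that $\ephi^n \in V$ solves an elliptic problem with source $(\sigma(\Theta^n) - \sigma(\theta_n))\nabla\phi_n$; testing with $\ephi^n$ and using $\sigma \ge \sigma_{\text{min}}$, $|\sigma'| \le \sigma_{\text{max}}'$ and the $W^{1,\infty}$-regularity of $\phi_n$ yields the cornerstone bound $\norm{\ephi^n}_{H^1} \le C\norm{\eth^n}$. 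Together with Lemma~\ref{lemma:DtU_bound}, this expresses both the potential and the deformation errors through the temperature error alone.

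Next I would derive the temperature error equation by subtracting~\eqref{eq:semi_weak_T} from~\eqref{eq:weak_T} at $t_n$. Its right-hand side splits into three parts: the temporal consistency error $\Dt\theta_n - \dot\theta_n$, which is $\ordo(k)$ in $L^2$ because $\ddot\theta \in L^1(0,\tend;L^2)$; the deformation mismatch $\bM : \strain{\Dt U^{n-1} - \dot u_n}$, controlled by Lemma~\ref{lemma:DtU_bound} plus the regularity of $u$; and the quadratic mismatch $\sigma(\Theta^{n-1})|\nabla\Phi^{n-1}|^2 - \sigma(\theta_n)|\nabla\phi_n|^2$. I would write the last as a purely temporal difference (which is $\ordo(k)$), a conductivity difference that is Lipschitz in $\eth^{n-1}$, and the gradient difference $\sigma(\Theta^{n-1})\nabla(\Phi^{n-1}+\phi_{n-1})\cdot\nabla\ephi^{n-1}$. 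The gradient difference is the delicate one: pairing it with a test function and using $\norm{\nabla\ephi^{n-1}} \le C\norm{\eth^{n-1}}$ from the cornerstone bound leaves a factor $\norm{\nabla\Phi^{n-1}}_{L^\infty}$, which is exactly the $W^{1,\infty}$-regularity supplied by the induction hypothesis.

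With the error equation in hand I would run two energy estimates. Testing with $\chi = \eth^n$ and applying the Cauchy--Schwarz and Young inequalities gives $\Dt\norm{\eth^n}^2 + \norm{\nabla\eth^n}^2 \le Ck^2 + C\norm{\eth^n}^2 + C\norm{\eth^{n-1}}^2$ after absorbing the deformation contribution via Lemma~\ref{lemma:DtU_bound}; the discrete Grönwall inequality (which needs $k < k_0$ so that the implicit $\norm{\eth^n}^2$ term may be absorbed, and which keeps the accumulated consistency error at $\ordo(k)$ over all $N = \tend/k$ steps) then yields $\norm{\eth^n}^2 + k\sum_{j}\norm{\nabla\eth^j}^2 \le Ck^2$. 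Repeating the estimate with $\chi = \Dt\eth^n$ upgrades this to the claimed pointwise bound $\norm{\eth^n}_{H^1}^2 \le Ck^2$, whence $\norm{\ephi^n}_{H^1}^2 \le Ck^2$ follows from the cornerstone bound. To obtain the pointwise $\norm{\Dt\eu^n}_{\Vd}^2 \le Ck^2$ I would differentiate the deformation error equation in time and test with $\Dtt\eu^n$, a higher-order analogue of Lemma~\ref{lemma:DtU_bound}; this simultaneously produces the uniform bounds on $\Dtt\eu^n$ needed below.

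Finally, the regularity of the iterates follows by elliptic bootstrapping once the error bounds are available. Viewing~\eqref{eq:semi_T} as $-\Delta\Theta^n = (\text{$L^2$ data})$ gives $\Theta^n \in H^2$ with $\norm{\Theta^n}_{H^2} \le C$, while applying the same regularity to the first time-difference of~\eqref{eq:semi_T} yields the summed bound on $\Dt\Theta^j$ in $H^2$; moreover $\sigma(\Theta^n)$ is a Hölder-continuous coefficient since $H^2 \hookrightarrow C(\bar\Omega)$ for $d \le 3$, so $L^p$-elliptic regularity for~\eqref{eq:semi_phi} together with a bootstrap of the integrability exponent through the Sobolev embeddings gives $\Phi^n \in W^{2,12/5} \cap W^{1,\infty}$ --- closing the induction by furnishing the $W^{1,\infty}$-bound needed at the next step. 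For the displacement I would rewrite~\eqref{eq:semi_stress} as the elliptic system $-\nabla\cdot((\bA + k\bB)\strain{\Dt U^n}) = (\text{$\bL^2$ data involving } U^{n-1})$ for the operator $\nabla\cdot((\bA + k\bB)\strain{\cdot})$, uniformly coercive for $k < k_0$; system elliptic regularity gives $\norm{\Dt U^n}_{\bH^2} \le C(1 + k\sum_{j<n}\norm{\Dt U^j}_{\bH^2})$, and a further discrete Grönwall application produces the uniform $\bH^2$-bound, with the $\Dtt U^n$ bounds following analogously after one more time-difference. I expect the main obstacle to be precisely the quadratic potential term in combination with this bootstrap: the energy estimate cannot be closed without $\norm{\nabla\Phi^{n-1}}_{L^\infty}$, which is only available through the regularity assertion itself, so the error and regularity statements are genuinely inseparable and must be threaded through a single joint induction.
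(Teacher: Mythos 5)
Your plan follows the paper's proof for most of its length: the cornerstone bound $\norm{\nabla\ephi^n}\le C\norm{\eth^n}$, the reduction of the deformation error to the temperature error via Lemma~\ref{lemma:DtU_bound}, the Grönwall argument under a step-size restriction, and the elliptic bootstrapping for the regularity of $\Theta^n$, $\Phi^n$ and $\Dt U^n$ (including rewriting the scheme with the operator $\nabla\cdot\big((\bA+k\bB)\strain{\cdot}\big)$ and applying system elliptic regularity plus Grönwall) are precisely the paper's steps. Your explicit joint induction carrying $\norm{\nabla\Phi^{n-1}}_{L^\infty}$ is an acceptable, somewhat blunter, replacement for the Sobolev--Hölder estimate of the quadratic term that the paper cites from Li and Sun; the paper's argument is implicitly of the same bootstrapping type, so this difference is organizational rather than substantive.

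The genuine gap is in your final step, the pointwise bound $\norm{\Dt\eu^n}_{\Vd}^2\le Ck^2$. Differentiating the deformation error equation in time and testing with $\Dtt\eu^n$ produces, after telescoping, the initial terms $\norm{\Dtt\eu^1}^2$ and $\norm{\Dt\eu^1}_{\bB}^2$ on the right-hand side. The initial data are exact, so $\eu^0=\Dt\eu^0=0$, but one step of a first-order scheme only yields $\norm{\Dt\eu^1}\le Ck^{3/2}$: testing the step-one equation with $\Dt\eu^1$ gives $\iprod{\Dtt\eu^1}{\Dt\eu^1}=k^{-1}\norm{\Dt\eu^1}^2$ against a right-hand side of size $Ck\,\norm{\Dt\eu^1}_{\Vd}$. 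Hence $\norm{\Dtt\eu^1}=k^{-1}\norm{\Dt\eu^1}\le Ck^{1/2}$, so $\norm{\Dtt\eu^1}^2\le Ck$ only, and your estimate closes to $\norm{\Dt\eu^n}_{\bB}^2\le Ck$ --- half an order short of the claim. This cannot be repaired by better preparation of the data, since second difference quotients of the error of a first-order method are not $\ordo(k)$. Moreover, the consistency error of the time-differenced equation involves first differences of $\ddot u(t_n)-\Dtt u(t_n)$ and of $\bA\strain{\dot u(t_n)-\Dt u(t_n)}$, i.e.\ differences of $\ordo(k)$ quantities divided by $k$; making these $\ordo(k)$ would require a fourth time derivative of $u$, which Assumption~\ref{ass:regularity} does not provide (it only gives $u^{(3)}\in L^1(0,\tend;\bL^2)$). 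The paper avoids both obstructions by never differentiating in time: it tests the undifferenced error equation~\eqref{eq:euH2}, rewritten with the operator $\bA+k\bB$, against $\nabla\cdot\big((\bA+k\bB)\strain{\Dt\eu^n}\big)$. The telescoped quantity is then $\norm{\Dt\eu^n}_{\bA+k\bB}^2$, whose initial value vanishes, the right-hand side needs only $k\sum_{j}\norm{\eth^j}_{H^1}^2\le Ck^2$ together with the previously established $\bH^2$ bounds, and the stated regularity of $\Dtt U^n$ is afterwards obtained for free by writing $\Dtt\eu^n=k^{-1}(\Dt\eu^n-\Dt\eu^{n-1})$ and dividing the optimal first-difference bounds by $k$, not from a differentiated energy estimate.
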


\begin{proof}
To begin with, we see that the error $\ephi^n$ satisfies
\begin{equation*}
  - \nabla \cdot \big(\sigma(\Theta^n) \nabla \ephi^n)\big) = \nabla \cdot \big( (\sigma(\Theta^n) - \sigma(\theta_n)) \nabla \phi_n \big).
\end{equation*}
Multiplying this equation by $\ephi^n$ and integrating directly yields
\begin{equation*}
  \norm{\nabla \ephi^n}^2 \le C \norm{\nabla \phi_n}_{L^{\infty}} \norm{\eth^n} \norm{\nabla \ephi^n} ,
\end{equation*}
so that
\begin{equation} \label{eq:ephi_bound}
  \norm{\nabla \ephi^n} \le C \norm{\eth^n}
\end{equation}
by the regularity assumptions. This inequality for $\ephi^n$ corresponds to Lemma~\ref{lemma:DtU_bound} for $\eu^n$. Further, we see that the error $\eth^n$ satisfies 
\begin{equation}\label{eq:eth_equation}
\begin{aligned}
  \Dt \eth^n - \Delta \eth^n &= \Big( \sigma(\Theta^{n-1}) - \sigma(\theta_{n-1}) \Big) |\nabla \phi_{n-1}|^2 + \sigma(\Theta^{n-1}) \Big( \nabla \Phi^{n-1} + \nabla \phi_{n-1}\Big) \cdot  \nabla \ephi^{n-1} \\
                             &\quad - M : \strain{\Dt \eu^{n-1}} + R_{\theta}^n, 
\end{aligned}
\end{equation}
where
\begin{align*}
   R_{\theta}^n &= \big( \sigma(\theta_{n-1}) - \sigma(\theta_{n}) \big) |\nabla \phi_{n-1}|^2 + \sigma(\theta_{n}) \big( \nabla \phi_{n-1} + \nabla \phi_{n}\big) \cdot \big( \nabla \phi_{n-1} - \nabla \phi_n\big) \\
              &\quad + M : \strain{\dot{u}_{n} - \dot{u}_{n-1}} + M : \strain{\dot{u}_{n-1} - \Dt u_{n-1}} .
\end{align*}
is bounded by $\norm{R_{\theta}^n} \le Ck$, again by the regularity assumptions. After multiplying by $\eth^n$ and integrating, we therefore get
\begin{equation} \label{eq:eth1}
\begin{aligned}
  \Dt \norm{\eth^n}^2 + 2\norm{\nabla \eth^n}^2 &\le C\norm{\eth^{n-1}} \norm{\eth^n} \norm{\nabla \phi_{n-1}}_{L^{\infty}} + \iprod{M : \strain{\Dt \eu^{n-1}}}{\eth^n} + Ck\norm{\eth^n}\\
&\quad + \iprod{\sigma(\Theta^{n-1}) \big( \nabla \Phi^{n-1} + \nabla \phi_{n-1}\big) \eth^n  }{\nabla \ephi^{n-1}}.
\end{aligned}
\end{equation}
The last term of this expression can be shown to be bounded by ${C(\norm{\eth^n}^2 + \norm{\ephi}_{H^1}^2)}$, see~\cite[p.627]{LiSun2013}, and for the second term we observe that for a generic $u \in \Vd$,
\begin{equation*}
   \iprod{\bM:(\nabla u)}{\chi}_{L^2} = \iprod{\nabla u}{\bM\chi}_Q = - \iprod{u}{\nabla\cdot(\bM\chi)}_{\bL^2} = - \iprod{u}{ \bM \nabla \chi}_{\bL^2}. 
\end{equation*}
As a completely analogous calculation holds also for $(\nabla u)^T$ and $\bM$ is symmetric, we thus have
\begin{equation} \label{eq:strain_by_parts}
  \iprod{\bM : \strain{u}}{\chi} = -\iprod{u}{\bM \nabla \chi} \le C\norm{u} \norm{\nabla \chi}.
\end{equation}
This implies that~\eqref{eq:eth1} reduces to
\begin{equation*}
  \Dt \norm{\eth^n}^2 + 2\norm{\nabla \eth^n}^2 \le C\big(k^2 + \norm{\eth^{n-1}}^2 + \norm{\eth^n}^2 + \norm{\ephi^{n-1}}_{H^1}^2 + \norm{\Dt \eu^{n-1}}^2\big) + \norm{\nabla \eth^n}^2.
\end{equation*}
Canceling the last term, summing up and using Equation~\eqref{eq:ephi_bound} and Lemma~\ref{lemma:DtU_bound} thus yields
\begin{equation*}
  \norm{\eth^n}^2 + k\sum_{j=1}^{n}{\norm{\nabla \eth^j}^2} \le Ck^2 + Ck\sum_{j=1}^{n}{\norm{\eth^{j}}^2}.
\end{equation*}
Under the step size restriction $Ck < 1$, we can eliminate the last term of the sum. An application of Grönwall's lemma then shows that the left-hand side is bounded by $Ck^2$. Using Equation~\eqref{eq:ephi_bound} and Lemma~\ref{lemma:DtU_bound} again, we see that in fact
\begin{equation*}
  \norm{\eth^n}^2 + k\sum_{j=1}^{n}{\norm{\nabla \eth^j}^2} + \norm{\nabla \ephi^n}^2 + \norm{\Dt \eu^n}^2 + \norm{\eu^n}_{\Vd}^2 + k \sum_{j=1}^{n}{\norm{\Dt \eu^j}_{\Vd}^2 } \le Ck^2  
\end{equation*}
From these preliminary bounds, we may deduce the desired regularity of $\Theta^n$ and $\Phi^n$ and then test~\eqref{eq:eth_equation} with $-\Delta \eth^n$ to acquire
\begin{equation*}
  \norm{\eth^n}_{H^1}^2 + k\sum_{j=1}^{n}{\norm{\Delta \eth^j}^2} \le Ck^2.
\end{equation*}
For details, we refer to~\cite[Theorem 3.1]{LiSun2013}. Let us instead investigate the remaining questions of the regularity of $U^n$ and the pointwise bound for $\Dt \eu^n$ in the $\Vd$-norm. By the defining equation, we have that
\begin{equation}\label{eq:euH2}
\begin{aligned} 
 \nabla \cdot \big( \bA\strain{\Dt \eu^n} + \bB\strain{\eu^n} \big) &=  \Dtt \eu^n + \nabla \cdot \big( \bM \Theta^n \big) 
                                                                    + \Dt^2 u(t_n) - \ddot{u}(t_n) \\
&\quad + \nabla \cdot \big(\bA \strain{\Dt u(t_n) - \dot{u}(t_n)} \big), 
\end{aligned}
\end{equation}
where the right-hand side is in $\bL^2$ since $\norm{\Dtt \eu^n} \le k^{-1}(\norm{\Dt \eu^n} + \norm{\Dt \eu^{n-1}}) \le C$. Let us denote it by $g_n$. Then we can rewrite the previous equation as
\begin{equation*} 
 \nabla \cdot \big( \bA\strain{\Dt \eu^n} + k\bB\strain{\Dt\eu^n} \big) =  g_n + \nabla \cdot \big(\bB\strain{\eu^{n-1}} \big).
\end{equation*}
Now since both $\bB$ and $\bA + k\bB$ induce bounded and coercive inner products on $\Vd$, we see that 
\begin{align*}
  \norm{\Dt \eu^n}_{\bH^2}^2 &\le C \norm{\nabla \cdot \big( \bA\strain{\Dt \eu^n} + k\bB\strain{\Dt\eu^n} \big)}^2 \\
                          &\le C\norm{g_n}^2 + C \norm{\eu^{n-1}}_{\bH^2}^2
\end{align*}
But since $\eu^{n-1} = k\sum_{j=1}^{n-1}{\Dt \eu^j}$, we can estimate the second term by Cauchy--Schwarz as
\begin{equation*}
  \norm{\eu^{n-1}}_{\bH^2}^2 \le k \sum_{j=1}^{n-1}{\norm{\Dt \eu^j}_{\bH^2}^2}.
\end{equation*}
An application of Grönwall's lemma thus shows that 
\begin{equation*}
  \norm{\Dt \eu^n}_{\bH^2} \le C,
\end{equation*}
which also implies that $\eu^n$, $U^n$ and $\Dt U^n$ are all in $\bH^2$. We may now multiply~\eqref{eq:euH2} by $\nabla \cdot \big( (\bA + k\bB)\strain{\Dt\eu^n} \big)$ and integrate to get 
\begin{equation*}
  \iprod{\Dt \strain{\Dt \eu^n}}{(\bA + k\bB)\strain{\Dt \eu^n}} + \norm{\nabla \cdot \big( (\bA + k\bB)\strain{\Dt\eu^n}\big)}^2 \le C \norm{\eth^n}_{\bH^1}^2 + C\norm{\eth^{n-1}}_{\bH^2}^2 ,
\end{equation*}
where we have used the Cauchy-Schwarz and Young inequalities and canceled a term $\frac{1}{2} \norm{\nabla \cdot \big( (\bA + k\bB)\strain{\Dt\eu^n}\big)}^2$. The first term on the left-hand side can be estimated from below by $\Dt \norm{\Dt \eu^n}_{\bA + k\bB}$, so summing up and using the equivalence of the ${(\bA + k\bB)}$- and $\Vd$-norms, we get
\begin{equation*}
  \norm{\Dt \eu^n}_{\Vd}^2 + k \sum_{j=1}^{n}{\norm{\Dt \eu^j}_{\bH^2}^2} \le C k \sum_{j=1}^{n-1}{\norm{\eth^j}_{H^1}^2} + Ck \sum_{j=1}^{n-1}{\norm{\Dt \eu^j}_{\bH^2}^2}.
\end{equation*}
But the first term in the right-hand side is bounded by $Ck^2$ and in the second term we may again use that $\norm{\Dt \eu^j}_{\bH^2}^2 \le k \sum_{i=1}^{j}{ \norm{\Dt \eu^i}_{\bH^2}^2}$. Defining
\begin{equation*}
  w_n = \norm{\Dt \eu^n}_{\Vd}^2 + k \sum_{j=1}^{n}{ \norm{\Dt \eu^j}_{\bH^2}^2},
\end{equation*}
we thus have
\begin{equation*}
  w_n \le Ck^2 + Ck \sum_{j=1}^{n-1}{w_j},
\end{equation*}
and an application of Grönwall's lemma shows that $w_n \le Ck^2$. This yields the final desired error bound, and additionally shows that $\norm{\Dt^2 \eu^n}_{\Vd}^2 + k \sum_{j=1}^{n}{ \norm{\Dt^2 \eu^j}_{\bH^2}^2} \le C$, which implies the stated regularity for $U^n$.

\end{proof}

\subsection{The fully discrete case} \label{subsec:fully_discrete}
We now turn to the fully discretized case and first prove an analogue to Lemma~\ref{lemma:DtU_bound}.

\begin{lemma} \label{lemma:RitzVolterra}
Let Assumptions~\ref{ass:viscoelasticity}-\ref{ass:regularity} be satisfied and $(\Theta^n, \Phi^n, U^n)$ and $(\Theta_h^n, \Phi_h^n, U_h^n)$ be solutions to equations~\eqref{eq:semi_weak_T}--\eqref{eq:semi_weak_stress} and~\eqref{eq:full_T}--\eqref{eq:full_stress}, respectively. Then there is a positive constant $k_0$ such that if $k < k_0$ we have for $n = 1, \ldots, N$ that
\begin{align*} 
  \norm{\euh^n}^2 + \norm{\Dt \euh^n}^2 &\le Ch^4 + Ck \sum_{j=1}^{n}{\norm{\ethh^j}^2 } \quad \text{and} \\
  \norm{\euh^n}_{\Vd}^2 + k \sum_{j=1}^{n}{\norm{\Dt \euh^j}_{\Vd}^2} &\le Ch^2 + Ck \sum_{j=1}^{n}{\norm{\ethh^j}^2 },
\end{align*}
with the constant $C$ independent of $k$, $h$ and $n$.
\end{lemma}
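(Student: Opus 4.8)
The plan is to mirror the energy argument of Lemma~\ref{lemma:DtU_bound}, the obstruction now being that the time-discrete displacement $U^n$ does not lie in $\Sdh$, so the error equation cannot simply be tested with $\Dt \euh^n$. Subtracting~\eqref{eq:full_stress} from~\eqref{eq:semi_weak_stress} gives, for all $\Chi \in \Sdh$,
\begin{equation*}
  \iprod{\Dtt \euh^n}{\Chi} + \iprod{\bA\strain{\Dt \euh^n} + \bB\strain{\euh^n}}{\strain{\Chi}} = \iprod{\bM \ethh^n}{\strain{\Chi}},
\end{equation*}
so the deformation error is driven purely by the temperature error $\ethh^n$, exactly as in the time-discrete case it was driven by $\eth^n$.

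To recover a testable quantity I would introduce a discrete, vector-valued analogue $W^n := \mathcal{R}_h U^n \in \Sdh$ of the Ritz--Volterra projection of~\cite{LinThomeeWahlbin1991}, tailored to the viscoelastic operator $\bA \strain{\Dt \cdot} + \bB \strain{\cdot}$, and split $\euh^n = \vartheta^n + \rho^n$ with $\vartheta^n = U_h^n - W^n \in \Sdh$ and $\rho^n = W^n - U^n$. The projection should be defined so that the combined stiffness contribution of $\rho^n$ against discrete test functions vanishes; concretely, I would require
\begin{equation*}
  \iprod{\bA \strain{\Dt \rho^n} + \bB \strain{\rho^n}}{\strain{\Chi}} = 0 \qquad \text{for all } \Chi \in \Sdh,
\end{equation*}
which, after expanding $\Dt \rho^n$, is the elliptic problem $\iprod{(\bA + k\bB)\strain{\rho^n}}{\strain{\Chi}} = \iprod{\bA \strain{\rho^{n-1}}}{\strain{\Chi}}$ and so determines $\rho^n$ recursively, the Volterra (memory) structure appearing through the accumulation in $n$. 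Coercivity of $\bA + k\bB$ from Assumption~\ref{ass:viscoelasticity} guarantees solvability. Using the regularity of $U^n$, $\Dt U^n$ and $\Dtt U^n$ from Theorem~\ref{theorem:time_error} together with standard finite element approximation theory, I would establish the optimal projection bounds $\norm{\rho^n} \le Ch^2$ and $\norm{\rho^n}_{\Vd} \le Ch$, as well as $k\sum_{j=1}^{n} \norm{\Dtt \rho^j}^2 \le Ch^4$; obtaining the last of these, where each $\Dt$ carries a factor $k^{-1}$, is where the summed form of the projection must be carefully exploited.

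With this splitting, $\vartheta^n$ and $\Dt \vartheta^n$ both lie in $\Sdh$, and the defining property of the projection removes the elliptic terms in $\rho^n$, leaving
\begin{equation*}
  \iprod{\Dtt \vartheta^n}{\Chi} + \iprod{\bA\strain{\Dt \vartheta^n} + \bB\strain{\vartheta^n}}{\strain{\Chi}} = \iprod{\bM \ethh^n}{\strain{\Chi}} - \iprod{\Dtt \rho^n}{\Chi},
\end{equation*}
so that only the $\bL^2$ residual $\Dtt \rho^n$ survives alongside the temperature coupling. Testing with $\Chi = \Dt \vartheta^n$ and invoking the two positivity inequalities $2\iprod{\Dt^2 g^n}{\Dt g^n} \ge \Dt \norm{\Dt g^n}^2$ and $2\iprod{\bB \strain{g^n}}{\strain{\Dt g^n}} \ge \Dt \norm{g^n}_{\bB}^2$ from Lemma~\ref{lemma:DtU_bound}, followed by Cauchy--Schwarz, Young's inequality, cancellation of the viscous term and summation over $n$, I would arrive at
\begin{equation*}
  \norm{\Dt \vartheta^n}^2 + \norm{\vartheta^n}_{\Vd}^2 + k\sum_{j=1}^{n} \norm{\Dt \vartheta^j}_{\Vd}^2 \le Ck\sum_{j=1}^{n} \big( \norm{\ethh^j}^2 + \norm{\Dtt \rho^j}^2 \big) + Ck\sum_{j=1}^{n}\norm{\Dt \vartheta^j}^2,
\end{equation*}
after which a discrete Grönwall argument removes the last sum. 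Inserting $k\sum \norm{\Dtt \rho^j}^2 \le Ch^4$ and combining with the bounds on $\rho^n$ through the triangle inequality $\euh^n = \vartheta^n + \rho^n$ then yields both asserted inequalities, the $\bL^2$ bound at order $h^4$ and the $\Vd$ bound at order $h^2$.

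The main obstacle I anticipate is the construction and analysis of the discrete Ritz--Volterra projection itself: it must simultaneously be well posed as a time-recursion (which the coercivity of $\bA + k\bB$ supplies), annihilate the full viscous-elastic stiffness term so that only the $\bL^2$ remainder $\Dtt\rho^n$ enters the $\vartheta$-equation, and retain optimal $\ordo(h^2)$ and $\ordo(h)$ approximation orders for $\rho^n$ and its backward differences despite the $k^{-1}$ amplification and the accumulation over time steps. Verifying that the memory structure does not degrade the convergence order is the genuinely new point relative to both the scalar continuous theory of~\cite{LinThomeeWahlbin1991} and the time-discrete Lemma~\ref{lemma:DtU_bound}.
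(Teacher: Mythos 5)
Your overall architecture coincides with the paper's: the same error equation driven by $\ethh^n$, the same splitting $\euh^n = \eta^n + \rho^n$ with the discrete Ritz--Volterra projection defined by $\iprod{\bA\strain{\Dt\rho^n} + \bB\strain{\rho^n}}{\strain{\Chi}} = 0$ for all $\Chi \in \Sdh$, and the same energy argument (test with $\Dt\eta^n$, the two positivity inequalities from Lemma~\ref{lemma:DtU_bound}, Young, summation) for the part lying in $\Sdh$. Whether one then absorbs the residual $\bL^2$-terms into the coercive $\Vd$-term, as the paper does, or keeps them and runs a discrete Gr\"onwall argument, as you do, is immaterial.

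The genuine gap is the step you defer to ``standard finite element approximation theory.'' The bounds $\norm{\rho^n} \le Ch^2$, $\norm{\rho^n}_{\Vd} \le Ch$, and above all the second-difference bound, are not standard, and proving them is the bulk of the paper's proof. Because $W^n$ is defined through the Volterra-type relation above, $\rho^n$ is not the error of any elliptic projection of $U^n$ alone, so no off-the-shelf estimate applies. The paper proceeds by a duality argument: for a test direction $\varphi$, solve the auxiliary problem $\iprod{\bA\strain{\Psi}}{\strain{\Chi}}_Q = -\iprod{\varphi}{\Chi}$ (or with $\partial\varphi/\partial x_j$ in place of $\varphi$ for the $\Vd$-estimate), insert the $\bA$-Ritz projection $\bR_h\Psi$, use the projection relation to trade $\iprod{\bA\strain{\Dt\rho^n}}{\strain{\bR_h\Psi}}$ for $-\iprod{\bB\strain{\rho^n}}{\strain{\bR_h\Psi}}$, and close the resulting memory recursion with Gr\"onwall's lemma under $Ck<1$ --- which is, in fact, where the constant $k_0$ in the statement comes from. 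For $\Dtt\rho^n$ the paper's device is to apply $\Dt$ to the projection equation itself and re-run the same duality argument, yielding $\norm{\Dtt \rho^n} + h\norm{\Dtt \rho^n}_{\Vd} \le Ch^2 + Ch^2 \norm{\Dtt U^n}_{\bH^2}$; since Theorem~\ref{theorem:time_error} supplies only $k\sum_{j}\norm{\Dtt U^j}_{\bH^2}^2 \le C$ and no pointwise $\bH^2$-bound on $\Dtt U^n$, this estimate is usable only after summation --- that, rather than ``exploiting the summed form of the projection,'' is how your claimed bound $k\sum_{j}\norm{\Dtt\rho^j}^2 \le Ch^4$ is made rigorous. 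Without this construction your projection bounds remain assertions, and the naive alternative $\norm{\Dtt\rho^n} \le 2k^{-1}\max_j\norm{\Dt\rho^j} \le Ch^2/k$ would destroy the convergence order you need.
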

\begin{remark}
  In the case of a first-order equation, one would typically first add and subtract the Ritz projection of $\eu^n$ in order to work only in the finite element space. This approach is viable also in the second-order case, if one defines the Ritz projection using the $\iprod{\bA\strain{\cdot}}{\strain{\cdot}}$ inner product. We refer to~\cite{ThomeeZhang1989} for the scalar-valued case. However, we choose to instead work with a Ritz-Volterra projection, see~\cite{LinThomeeWahlbin1991} for the scalar-valued case. Such a projection takes both the $\bA$- and $\bB$-terms into account simultaneously, i.e.\ it is a projection of $C^1(0, \tend; \Vd)$-functions rather than of elements in $\Vd$. In the present situation, we need of course to consider a discretized version, but it nevertheless simplifies matters.
\end{remark}
\begin{proof}
  Subtracting~\eqref{eq:semi_weak_stress} from~\eqref{eq:full_stress}, we see that
  \begin{equation*}
    \iprod{\Dtt \euh^n}{\Chi} + \iprod{\bA \strain{\Dt\euh^n} + \bB \strain{\euh^n}}{\strain{\Chi}} =  \iprod{\bM \ethh^n}{\strain{\Chi}} 
  \end{equation*}
for all $\Chi \in \Sdh$. Now let $\euh^n = \eta^n + \rho^n$, where
\begin{equation*}
  \eta^n = U_h^n - W^n \in \Sdh \quad \text{and} \quad \rho^n = W^n - U^n,
\end{equation*}
with the discrete Ritz-Volterra projection $W^n$ of $U^n$ satisfying $W^0 = U^0 = 0$ and
\begin{equation} \label{eq:RitzVolterra}
  \iprod{\bA \strain{\Dt W^n - \Dt U^n} + \bB \strain{W^n - U^n}}{\strain{\Chi}} = 0
\end{equation}
for all $\Chi \in \Sdh$. We note that Equation~\eqref{eq:RitzVolterra} may also be stated as 
\begin{equation*}
  \iprod{\bA \strain{\Dt \rho^n} + \bB \strain{\rho^n}}{\strain{\Chi}} = 0,
\end{equation*}
and that since $\Dt U^0 = 0$, also $\Dt W^0 = 0$. Additionally, we need the Ritz projection $\bR_h$ given by the viscosity term. For a generic $u \in \Vd$, this is defined by
\begin{equation*}
\iprod{\bA \strain{\bR_h u - u}}{\strain{\Chi}} = 0
\end{equation*}
for all $\Chi \in \Sdh$, and we have the inequality
\begin{equation*}
  \norm{\bR_h u - u} + h \norm{\bR_h u -u}_{\Vd} \le Ch^2 \norm{u}_{\bH^2}.
\end{equation*}

We start by estimating the $\Vd$-norms of $\Dt \rho^n$ and $\rho^n$. To this end, we observe that for a generic $u$, we have
\begin{equation*}
  \norm{u}_{\Vd}^2 = \norm{\strain{u}}_Q^2 \le \norm{\nabla u}_Q^2 = \sum_{j=1}^{d}{\Bignorm{\frac{\partial u}{\partial x_j}}^2}
\end{equation*}
and that
\begin{equation*}
  \Bignorm{\frac{\partial u}{\partial x_j}} = \sup_{\varphi \in C_0^{\infty}(\Omega)^d, \norm{\varphi} = 1} \iprod{\frac{\partial u}{\partial x_j}}{\varphi} .
\end{equation*}
We therefore take $\varphi \in C_0^{\infty}(\Omega)^d$ with $\norm{\varphi} = 1$ and let $\Psi \in \Vd$ be the solution to 
\begin{equation*}
  \iprod{\bA \strain{\Psi}}{\strain{\Chi}}_Q = - \iprod{\frac{\partial \varphi}{\partial x_j}}{\Chi}.
\end{equation*}
Then 
\begin{align*}
  \iprod{\frac{\partial \Dt \rho^n}{\partial x_j}}{\varphi} &= - \iprod{\Dt \rho^n}{\frac{\partial \varphi}{\partial x_j}} = \iprod{\bA \strain{\Psi}}{\strain{\Dt \rho^n}} = \iprod{\bA \strain{\Dt \rho^n}}{\strain{\Psi}} \\
&= \iprod{\bA \strain{\Dt \rho^n}}{\strain{\Psi - \bR_h \Psi}} + \iprod{\bA \strain{\Dt \rho^n}}{\strain{\bR_h \Psi}} \\
&= \iprod{\bA \strain{\Dt \rho^n}}{\strain{\Psi - \bR_h \Psi}} - \iprod{\bB \strain{\rho^n}}{\strain{\bR_h \Psi}} =: R_1 + R_2,
\end{align*}
where the last term is bounded by
\begin{equation*}
  R_2 \le C \norm{\rho^n}_{\Vd} \norm{\bR_h \Psi}_{\Vd} \le C \norm{\rho^n}_{\Vd} (\norm{\bR_h \Psi - \Psi}_{\Vd} + \norm{\Psi}_{\Vd}) \le C \norm{\rho^n}_{\Vd}.
\end{equation*}
Moreover, since $\Dt W^n \in \Sdh$, the first term is bounded by
\begin{align*}
  R_1 = -\iprod{\bA \strain{\Dt U^n}}{\strain{\Psi - \bR_h \Psi}} &= \iprod{\bA \strain{\bR_h \Dt U^n - \Dt U^n}}{\strain{\Psi - \bR_h \Psi}} \\
&= \iprod{\bA \strain{\bR_h \Dt U^n - \Dt U^n}}{\strain{\Psi}} \\
&\le C\norm{\bR_h \Dt U^n - \Dt U^n}_ {\Vd} \norm{\Psi}_{\Vd} \\
&\le Ch \norm{\Dt U^n}_{\bH^2}.
\end{align*}
By expressing $\rho^n$ in terms of $\Dt \rho^j$ and noting that $\rho^0 = 0$, we thus have
\begin{equation*}
  \norm{\Dt \rho^n}_ {\Vd} \le Ch \norm{\Dt U^n}_{\bH^2} + Ck \sum_{j=1}^{n}{\norm{\Dt \rho^j}_{\Vd}},
\end{equation*}
and under the step size restriction $Ck < 1$ we can eliminate the last term of the sum and apply Grönwall's lemma. This shows that
\begin{equation*} 
  \norm{\Dt \rho^n}_{\Vd} \le Ch \Big( \norm{\Dt U^n}_{\bH^2}  + Ck \sum_{j=1}^{n-1}{\norm{\Dt U^j}_{\bH^2}} \Big) .
\end{equation*}
By using the regularity shown in Theorem~\ref{theorem:time_error} and then summing over $n$, we see that
\begin{equation*}
  \norm{\rho^n}_{\Vd} + \norm{\Dt \rho^n}_{\Vd} \le Ch.
\end{equation*}
Using these bounds we may now estimate $\rho$ also in the $\bL^2$-norm, by instead letting $\Psi \in \Vd$ be the solution to 
\begin{equation*}
  \iprod{\bA \strain{\Psi}}{\strain{\Chi}}_Q = - \iprod{\varphi}{\Chi}.
\end{equation*}
Then as before,
\begin{equation*}
  \iprod{\Dt \rho^n}{\varphi} = \iprod{\bA \strain{\bR_h \Dt U^n - \Dt U^n}}{\strain{\Psi}} + \iprod{\bB \strain{\rho^n}}{\strain{\bR_h \Psi}} =: R_3 + R_4,
\end{equation*}
where
\begin{equation*}
  R_3 \le C\norm{\bR_h \Dt U^n - \Dt U^n}_ {\Vd} \norm{\Psi}_{\Vd} \le Ch^2 \norm{\Dt U^n}_{\bH^2}.
\end{equation*}
For $R_4$, we note that $\norm{\Psi}_{\bH^2} \le C \norm{\varphi} \le C$, so that by using integration by parts and observing that both $\rho^n$ and $\Psi$ are zero on $\partial \Omega$ we get,
\begin{align*}
  R_4 &\le \iprod{\bB \strain{\rho^n}}{\strain{\bR_h \Psi - \Psi}} + \iprod{\bB \strain{\rho^n}}{\strain{\Psi}}\\
&\le C\norm{\rho^n}_{\Vd}\norm{\bR_h\Psi - \Psi}_{\Vd} + C \norm{\rho^n}\norm{\Psi}_{\bH^2} + \norm{\rho^n}_{\bL^2(\partial\Omega)}\norm{\Psi}_{\bH^1(\partial\Omega)} \\
&\le Ch^2 + C\norm{\rho^n}.
\end{align*}
Hence similarly to the calculation for the $\Vd$-norm, Grönwall's lemma implies that
\begin{equation*} 
  \norm{\Dt \rho^n} \le Ch^2 \Big( \norm{\Dt U^n}_{\bH^2}  + Ck \sum_{j=1}^{n-1}{\norm{\Dt U^j}_{\bH^2}} \Big) ,
\end{equation*}
so that
\begin{equation*}
  \norm{\rho^n} + \norm{\Dt \rho^n} \le Ch^2.
\end{equation*}
To bound $\eta^n$, we also need a bound on the second derivative of $\rho^n$. For this, we apply $\Dt$ to~\eqref{eq:RitzVolterra} and then follow the same procedure as above. This shows that 
\begin{equation*}
  \norm{\Dtt \rho^n}_{\Vd} \le Ch \Big( \norm{\Dtt U^n}_{\bH^2}  + Ck \sum_{j=1}^{n-1}{\norm{\Dtt U^j}_{\bH^2}} \Big),
\end{equation*}
and similarly for the $\bL^2$-norm, but with $h^2$ instead of $h$. 
We do not have pointwise $\bH^2$-regularity of $\Dtt U^n$ from Theorem~\ref{theorem:time_error}, but we may estimate the sum by 
\begin{equation*}
k \sum_{j=1}^{n-1}{\norm{\Dtt U^j}_{\bH^2}} \le  \Big( k \sum_{j=1}^{n-1}{\norm{\Dtt U^j}_{\bH^2}^2}\Big)^{1/2} \le C,
\end{equation*}
and conclude that
\begin{equation} \label{eq:Dtt_rho_Vbound}
  \norm{\Dtt \rho^n} + h\norm{\Dtt \rho^n}_{\Vd} \le Ch^2 + Ch^2 \norm{\Dtt U^n}_{\bH^2}.
\end{equation}
Here the $\norm{\Dtt U^n}_{\bH^2}$-term is not necessarily finite, but since this bound will only be used inside a sum it causes no problems.

Now for $\eta^n$, by using~\eqref{eq:RitzVolterra} to exchange $W^n$ for $U^n$ and then~\eqref{eq:semi_weak_stress}, \eqref{eq:full_stress}, we get
\begin{align*}
  \iprod{\Dtt \eta^n}{\Chi} &+ \iprod{\bA \strain{\Dt \eta^n} + \bB \strain{\eta^n}}{\strain{\Chi}} \\
                            &= \iprod{\Dtt U^n - \Dtt W^n}{\Chi} + \iprod{\bM\ethh^n}{\strain{\Chi}}  \\
&= -\iprod{\Dtt \rho^n}{\Chi} + \iprod{M\ethh^n}{\strain{\Chi}} 
\end{align*}
Choosing $\Chi = \Dt \eta^n \in \Sdh$, by~\eqref{eq:Dtt_rho_Vbound} we get, after canceling a $C_2 \norm{\Dt \eta^n}_{\Vd}^2$ term,
\begin{equation*}
  \Dt \norm{\Dt \eta^n}^2 + C_2 \norm{\Dt \eta^n}_{\Vd}^2 + \Dt \norm{\eta^n}_{\bB}^2 \le  C\big(h^4 + h^4 \norm{\Dtt U^n}_{\bH^2}^2 + \norm{\ethh^n}^2\big),
\end{equation*}
so summing and noting again that $k \sum_{j=1}^{n-1}{\norm{\Dt U^j}_{\bH^2}^2} \le C$, we have
\begin{equation*}
  \norm{\Dt \eta^n}^2 + k \sum_{j=1}^{n-1}{ \norm{\Dt \eta^j}_{\Vd}^2} + \norm{\eta^n}_{\Vd}^2 \le Ch^4 + Ck \sum_{j=1}^{n-1}{\norm{\ethh^j}^2}.
\end{equation*}
Finally, combining the bounds for $\rho^n$, $\eta^n$ and their first derivatives leads to the statement of the lemma. 
\end{proof}
\begin{remark}
  We note that the regularity given in Theorem~\ref{theorem:time_error} is not enough to show $\norm{\Dt \euh^n}_{\Vd}^2 \le Ch^2 + Ck \sum_{j=1}^{n}{\norm{\ethh^j}^2}$, but such a bound is not required for the proof of the next theorem. 
\end{remark}

\begin{theorem} \label{theorem:full_error}
  Let Assumptions~\ref{ass:viscoelasticity}-\ref{ass:regularity} be satisfied and $(\Theta^n, \Phi^n, U^n)$ and $(\Theta_h^n, \Phi_h^n, U_h^n)$ be solutions to equations~\eqref{eq:semi_weak_T}--\eqref{eq:semi_weak_stress} and~\eqref{eq:full_T}--\eqref{eq:full_stress}, respectively. Then there are positive constants $k_0$ and $h_0$ such that if $k < k_0$ and $h < h_0$ then for $n = 1, \ldots, N$,
  \begin{equation*}
    \norm{\ethh^n} +  \norm{\ephih^n} + \norm{\Dt \euh^n} \le Ch^2 \quad \text{and} \quad  \norm{\ethh^n}_{H^1} + \norm{\ephih^n}_{H^1} + \norm{\Dt \euh^n}_{\Vd} \le Ch,
  \end{equation*}
with the constant $C$ independent of $k$, $h$ and $n$.
\end{theorem}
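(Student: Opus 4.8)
The plan is to mirror the time-discrete analysis of Theorem~\ref{theorem:time_error}, with the mesh parameter $h$ now playing the role that $k$ did there. A key simplification is that, since $\Theta^n$ and $\Theta_h^n$ live on the same time grid, subtracting~\eqref{eq:semi_weak_T} from~\eqref{eq:full_T} produces no temporal residual: for all $\chi \in \Sh$,
\begin{equation*}
  \iprod{\Dt \ethh^n}{\chi} + \iprod{\nabla \ethh^n}{\nabla \chi} = \iprod{\sigma(\Theta_h^{n-1})|\nabla \Phi_h^{n-1}|^2 - \sigma(\Theta^{n-1})|\nabla \Phi^{n-1}|^2}{\chi} - \iprod{\bM : \strain{\Dt \euh^{n-1}}}{\chi}.
\end{equation*}
The deformation error $\Dt \euh^{n-1}$ is already tied to the temperature error by Lemma~\ref{lemma:RitzVolterra}, and I would first obtain the corresponding bound for the potential: subtracting~\eqref{eq:semi_weak_phi} from~\eqref{eq:full_phi} gives a discrete variable-coefficient elliptic equation for $\ephih^n$, and testing it against itself, using the Lipschitz bound on $\sigma$ together with a standard finite-element estimate, should yield the crude inequality $\norm{\nabla \ephih^n} \le C(h + \norm{\ethh^n})$, the fully discrete analogue of~\eqref{eq:ephi_bound}.

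The core is then the energy estimate for the temperature. Splitting $\ethh^n = \xi^n + \zeta^n$, with $\zeta^n = R_h \Theta^n - \Theta^n$ the elliptic-projection error and $\xi^n \in \Sh$, the $H^2$-regularity of $\Theta^n$ from Theorem~\ref{theorem:time_error} gives $\norm{\zeta^n} \le Ch^2$ and $\norm{\zeta^n}_{H^1} \le Ch$. Testing the error equation with $\xi^n$ and using $2\iprod{\Dt \xi^n}{\xi^n} \ge \Dt\norm{\xi^n}^2$, I expect (after inserting the crude potential bound, the deformation bound from Lemma~\ref{lemma:RitzVolterra}, and~\eqref{eq:strain_by_parts} for the $\bM$-term) to reach an estimate of the form $\Dt\norm{\xi^n}^2 + \norm{\nabla \xi^n}^2 \le C(h^4 + \norm{\xi^n}^2 + \norm{\xi^{n-1}}^2) + Ck\sum_{j=1}^{n-1}\norm{\ethh^j}^2$. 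Summing over $n$, absorbing the resulting double sum, and applying Gr\"onwall's lemma under a restriction $Ck < 1$ then gives $\norm{\xi^n}^2 + k\sum_{j=1}^n\norm{\nabla \xi^j}^2 \le Ch^4$, hence $\norm{\ethh^n} \le Ch^2$. The pointwise $H^1$ bound follows by the inverse inequality $\norm{\xi^n}_{H^1} \le Ch^{-1}\norm{\xi^n} \le Ch$, valid on the quasi-uniform mesh, combined with $\norm{\zeta^n}_{H^1} \le Ch$.

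With the temperature controlled, the remaining quantities drop out. The crude potential bound upgrades to $\norm{\ephih^n}_{H^1} \le C(h + \norm{\ethh^n}) \le Ch$, and a duality argument on the elliptic problem (using convexity of $\Omega$) gives $\norm{\ephih^n} \le Ch^2$. For the deformation, the first bound of Lemma~\ref{lemma:RitzVolterra} with $\norm{\ethh^j} \le Ch^2$ gives $k\sum_{j=1}^n\norm{\ethh^j}^2 \le Ch^4$, hence $\norm{\Dt \euh^n} \le Ch^2$. The energy-norm bound $\norm{\Dt \euh^n}_{\Vd} \le Ch$ is obtained, as noted in the remark following Lemma~\ref{lemma:RitzVolterra}, through the decomposition $\euh^n = \eta^n + \rho^n$ used there: $\norm{\Dt \rho^n}_{\Vd} \le Ch$ holds pointwise, while the finite-element part satisfies $\norm{\Dt \eta^n} \le Ch^2$ in $\bL^2$, so the inverse inequality $\norm{\Dt \eta^n}_{\Vd} \le Ch^{-1}\norm{\Dt \eta^n} \le Ch$ closes the bound.

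The main obstacle is the quadratic term $\sigma(\Theta_h^{n-1})|\nabla\Phi_h^{n-1}|^2 - \sigma(\Theta^{n-1})|\nabla\Phi^{n-1}|^2$. Writing it as a coefficient difference plus the factorization $|\nabla\Phi_h^{n-1}|^2 - |\nabla\Phi^{n-1}|^2 = (\nabla\Phi_h^{n-1}+\nabla\Phi^{n-1})\cdot\nabla\ephih^{n-1}$, every resulting contribution, once tested against $\xi^n$, carries a factor of $\nabla\Phi_h^{n-1}$ that must be controlled in a strong norm such as $L^4$ or $L^\infty$. A uniform bound $\norm{\nabla\Phi_h^{n-1}}_{L^\infty} \le C$ reduces the analysis to essentially the continuous-potential situation of~\cite[p.627]{LiSun2013}, but such a bound depends on $\ephih^{n-1}$ being small in a strong norm, which is itself what we are proving. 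This circularity is the delicate point and must be closed by an induction/bootstrap over $n$: assuming the asserted smallness up to level $n-1$, inverse inequalities on the quasi-uniform mesh convert the already-established $\bL^2$- and $H^1$-smallness of $\ethh^{n-1}$ and $\ephih^{n-1}$ into the required $L^\infty$ control of $\nabla\Phi_h^{n-1}$, and the estimate at level $n$ then closes. Performing the temporal analysis separately (so that $\Theta^n$ is already known to be regular before $h$ enters) is exactly what keeps this bootstrap free of any coupling $k \le h^{d/r}$.
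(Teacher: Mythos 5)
Your overall skeleton (split $\ethh^n$ via the Ritz projection, test with the finite-element part, bound the deformation term through Lemma~\ref{lemma:RitzVolterra} and \eqref{eq:strain_by_parts}, sum, apply Gr\"onwall, finish with inverse inequalities) matches the paper's proof. But the crux of the theorem is the quadratic potential term, and there your proposal has a genuine gap. You claim to reach an energy inequality with $Ch^4$ on the right-hand side by obtaining $\norm{\nabla\Phi_h^{n-1}}_{L^\infty}\le C$ and then arguing ``as in the continuous-potential situation.'' This mechanism cannot deliver $h^4$. Even granting the $L^\infty$ bound, the potential contribution is controlled through the crude estimate $\norm{\nabla\ephih^{n-1}}\le C(h+\norm{\ethh^{n-1}})$, whose $O(h)$ term, paired against $\eh^n$ and handled by Young's inequality, produces $Ch^2+C\norm{\eh^{n-1}}^2+C\norm{\eh^n}^2$; Gr\"onwall then gives only $\norm{\ethh^n}\le Ch$, i.e.\ first order in $L^2$. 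The optimal rate requires the refined estimate that the paper imports from~\cite[p.627]{LiSun2013},
\begin{equation*}
  \iprod{R_\phi}{\eh^n} \le Ch^3 + Ch^4\norm{\Dt \Theta^n}_{H^2}^2 + Ch^{-1}\norm{\eh^{n-1}}^4 + C\norm{\eh^{n-1}}^2 + \tfrac{1}{4}\norm{\eh^n}_{H^1}^2,
\end{equation*}
which exploits the Galerkin orthogonality of the discrete potential equation and the $W^{2,12/5}$ regularity of $\Phi^n$, not mere boundedness of $\nabla\Phi_h^{n-1}$. Note also the structure of the paper's bootstrap: the induction hypothesis is the constant-free threshold $\norm{\eh^m}\le h^{1/2}$ (which neutralizes the quartic term, $Ch^{-1}\norm{\eh^{n-1}}^4\le C\norm{\eh^{n-1}}^2$), the conclusion is $\norm{\eh^n}\le \tilde{C}h^{3/2}\le h^{1/2}$ for $h$ small, and only after this intermediate $h^{3/2}$ bound is established does one improve the potential estimate (replacing $Ch^3$ by $Ch^4$ and removing the quartic term) and re-run Gr\"onwall to get $h^2$. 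Your single-stage induction on ``the asserted smallness'' with unspecified constants does not close: if the constant produced at level $n$ exceeds the one assumed at levels $m<n$, the argument is circular.

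Separately, your proposed route to $L^\infty$ control of $\nabla\Phi_h^{n-1}$ fails in three dimensions, which the theorem covers ($d\le 3$). The inverse inequality on a quasi-uniform mesh reads $\norm{\chi}_{L^\infty}\le Ch^{-d/2}\norm{\chi}$ for $\chi\in\Sh$, so from $H^1$-smallness of order $h$ (compared against an interpolant of $\Phi^{n-1}$, since $\ephih^{n-1}\notin\Sh$) you obtain at best $\norm{\nabla\Phi_h^{n-1}-\nabla I_h\Phi^{n-1}}_{L^\infty}\le Ch^{1-d/2}$, which is $Ch^{-1/2}$ and hence unbounded for $d=3$. The remaining parts of your proposal (the crude bound for $\ephih^n$, the use of Lemma~\ref{lemma:RitzVolterra} once $\norm{\ethh^j}\le Ch^2$ is known, and the final inverse-inequality step for the $H^1$ and $\Vd$ bounds) are consistent with the paper, but without the LiSun-type duality estimate and the two-stage $h^{1/2}$-threshold bootstrap the central claim $\norm{\ethh^n}\le Ch^2$ is not established.
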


\begin{proof}
The idea is, similarly to the time-discrete case, essentially to write down the equation for $\ethh^n$, test it with $\ethh^n$, express the errors $\euh^n$ and $\ephih^n$ in terms of $\ethh^j$ by Lemma~\ref{lemma:RitzVolterra} and its potential-analogue, and finally use Grönwall's lemma. However, since $\ethh^n$ does not belong to the finite element space, we need to introduce instead
\begin{equation*}
  \eh^n = \Theta_h^n - R_h \Theta^n,
\end{equation*}
where $R_h$ denotes the Ritz projection onto $\Sh$
. Due to Theorem~\ref{theorem:time_error} we then have $\norm{\ethh^n} \le \norm{\eh^n} + \norm{R_h \Theta^n - \Theta^n} \le \norm{\eh^n} + Ch^2$. It follows that for all $\chi \in \Sh$,
\begin{equation*}
  \iprod{\Dt \eh^n}{\chi} + \iprod{\nabla \theta_h^n}{\nabla \chi} = \iprod{\Dt (\Theta^n - R_h \Theta^n)}{\chi} + \iprod{R_\phi}{\chi} - \iprod{M : \strain{\Dt \euh^{n-1}}}{\chi},
\end{equation*}
where $R_\phi$ contains terms related to the potential $\phi$. Choosing $\chi = \eh^n$, we know from~\cite{LiSun2013} that 
\begin{equation*}
  \iprod{R_\phi}{\eh^n} \le Ch^3 + Ch^4\norm{\Dt \Theta^n}_{H^2}^2 + Ch^{-1}\norm{\eh^{n-1}}^4 + C\norm{\eh^{n-1}}^2 + \frac{1}{4} \norm{\eh^n}_{H^1}^2,
\end{equation*}
and we also have by~\eqref{eq:strain_by_parts} that
\begin{equation*}
  \iprod{M : \strain{\Dt \euh^{n-1}}}{\eh^n} \le C\norm{\Dt \euh^{n-1}}^2 + \frac{1}{4} \norm{\eh^n}_{H^1}^2.
\end{equation*}
We additionally know that $\norm{\eh^0} = \norm{I_h \theta_0 - \theta_0} \le Ch^2 < h^{1/2}$ if $h < h_0$. Assuming that $\norm{\eh^{m}} \le h^{1/2}$ for $m = 1, \ldots, n-1$ therefore means that
\begin{equation*}
  \Dt \norm{\eh^m}^2 + \norm{\eh^m}_{H^1}^2 \le Ch^3 + Ch^4\norm{\Dt \Theta^m}_{H^2}^2 + C\norm{\eh^{m-1}}^2 + C\norm{\Dt \euh^{m-1}}^2
\end{equation*}
for $m = 1, \ldots, n$, which after summation and usage of Lemma~\ref{lemma:RitzVolterra} yields
\begin{align*}
  \norm{\eh^m}^2 + k\sum_{j=1}^{m}{\norm{\eh^j}_{H^1}^2} &\le Ch^3 + Ch^4 + Ck\sum_{j=1}^{m-1}{\norm{\eh^{j}}^2} + Ck\sum_{j=1}^{m-1}{\norm{\Dt \euh^{j}}^2} \\
&\le Ch^3 + Ck\sum_{j=1}^{m-1}{\Big( \norm{\eh^{j}}^2 + Ck\sum_{i=1}^{j}{\norm{\eh^{i}}^2}\Big)}.
\end{align*}
If we now set $g_m = \max_{1 \le j \le m}\big( \norm{\eh^{j}}^2 + Ck\sum_{i=1}^{j}{\norm{\eh^{i}}^2}\big)$ we have
\begin{equation*}
  g_m \le Ch^3 + Ck \sum_{j=1}^{m-1}{g_j},
\end{equation*}
to which we may apply Grönwall's lemma to acquire
\begin{equation*}
  \norm{\eh^{n}}^2 + Ck\sum_{j=1}^{n}{\norm{\eh^{j}}^2} \le \tilde{C}h^3.
\end{equation*}
Hence if $\tilde{C}h^{5/2} \le 1$ we have that $\norm{\eh^n} \le h^{1/2}$. Thus by induction $\norm{\eh^n} \le h^{1/2}$ holds for all $n$ such that $0\le n \le N$. But then also the other calculations just performed are valid for $1\le n \le N$, so in fact $\norm{\eh^n} \le h^{3/2}$. This preliminary bound may be used as in~\cite[p.\ 631]{LiSun2013} to show $\norm{\ephih^n} \le Ch$ and to improve the bound of the quadratic potential term to
\begin{equation*}
  \iprod{R_\phi}{\eh^n} \le Ch^4 + Ch^4\norm{\Dt \Theta^n}_{H^2}^2 + C\norm{\eh^{n-1}}^2 + \frac{1}{4} \norm{\eh^n}_{H^1}^2.
\end{equation*}
Hence,
\begin{align*}
  \norm{\eh^n}^2 + k\sum_{j=1}^{n}{\norm{\eh^j}_{H^1}^2} \le Ch^4 + Ck\sum_{j=1}^{m-1}{\Big(\norm{\eh^{j}}^2 + Ck\sum_{i=1}^{j}{\norm{\eh^{i}}^2}\Big)},
\end{align*}
and once more applying Grönwall's lemma to $g_n$ shows that
\begin{equation*}
  \norm{\eh^n}^2 + k\sum_{j=1}^{n}{\norm{\eh^j}_{H^1}^2} \le Ch^4.
\end{equation*}
This proves $\norm{\ethh^n} \le Ch^2$, and from~\cite{LiSun2013} we find ${\norm{\ephih^n} + h\norm{\ephih^n}_{H^1} \le Ch^2}$. Applying Lemma~\ref{lemma:RitzVolterra} gives $\norm{\Dt \euh^n} \le Ch^2$. Finally, by inverse inequalities we find also that $\norm{\ethh^n}_{H^1} + \norm{\Dt \euh^n}_{\Vd} \le Ch$.
\end{proof}

\begin{proof}[Proof (of Theorem~\ref{theorem:main})]
This follows directly from Theorem~\ref{theorem:time_error} and Theorem~\ref{theorem:full_error} upon observing that, e.g.,
\begin{equation*}
  \norm{\Dt U_h^n - \dot{u}_n} \le \norm{\euh} + \norm{\eu} + \norm{\Dt u_n - \dot{u}_n},
\end{equation*}
 where the last term is bounded in the proper way due to the regularity assumptions on the solution to the continuous system.
\end{proof}

\section{Numerical experiments} \label{sec:experiments}
We have implemented both the method based on~\eqref{eq:full_T}--\eqref{eq:full_stress} and the corresponding fully implicit method based on implicit Euler, using FEniCS (see e.g.~\cite{AlnaesBlechtaEtAl2015,LoggMardalEtAl2012a}). These implementations were then used to verify our theoretical results by applying them to the following test examples.

\subsection{Problem  1} \label{subsec:problem1}
First consider the two-dimensional problem with $\Omega = (0, 1)^2$, $\bM = I$, $f = [0,0]^T$ and the viscosity and elasticity tensors given in Voigt notation by
\begin{equation*}
  \bA = \bB =
  \begin{bmatrix}
    1 & 1 & 0 \\
    1 & 1 & 0 \\
    0 & 0 & 1 
  \end{bmatrix} .
\end{equation*}
We take the electrical conductivity to be given by 
\begin{equation*}
   \sigma(\theta) = 2.5 - \arctan(5\theta - 10),
\end{equation*}
which has a rather steep slope close to $\theta = 2$. The initial conditions are given by $\theta_0(x, y) = 0$ and $u_0(x,y) = v_0(x,y) = [0, 0]^T$. These functions also define the Dirichlet boundary conditions for $\theta$ and $u$, while for $\phi$ they are given by $\phi_b(x,y) = 5(1-x)$.

We discretize $\Omega$ by first subdividing it into squares and then dividing each square into four triangles. With $N_x$ squares in each dimension, each triangle has diameter $h = 1/N_x$ and the full grid has $4N_x^2$ triangles. We take $N_x \in \{4, 8, 16, 32, 64\}$. 
Since the error should be $\ordo(h^2 + k)$, we choose the number of time steps to be $N_t = N_x^2 / 2$. With the final time $\tend = 1$, this gives $k = 2 h^2$. We emphasize here that the time steps could be taken much larger than this, but illustrating the error is then less straightforward. Finally, because the exact solution of the problem is not available we cannot compute the exact errors. Instead, we compare the different approximations to a reference approximation $(\Theta_{\text{ref}}, \Phi_{\text{ref}}, U_{\text{ref}})$ computed by the implicit Euler scheme with $N_x = 128$ and $N_t = 8192$.

Figure~\ref{fig:problem1} shows the errors
\begin{equation} \label{eq:errors}
  \max_{1 \le n \le N_t} \norm{\Theta_h^n - \Theta_{\text{ref}}(t_n)}_{L^2}, 
\max_{1 \le n \le N_t} \norm{\Phi_h^n - \Phi_{\text{ref}}(t_n)}_{L^2} \quad \text{and} \quad \max_{1 \le n \le N_t} \norm{U_h^n - U_{\text{ref}}(t_n)}_{\bL^2}
\end{equation}
for the different discretizations on a logarithmic scale, for both the semi-implicit method (left) and the method based on implicit Euler (right). These clearly exhibit the expected error behaviour predicted by Theorem~\ref{theorem:full_error}, except for the first points where the grid is very coarse. We also note that the errors are very similar in size, which means that the semi-implicit method is much more efficient. A peculiar effect in this case is that the semi-implicit errors in $\theta$ and $\phi$ are actually less than the implicit Euler errors, though this does not hold for the error in $u$.

\begin{figure}[t!]
\centering\includegraphics[width=0.49\columnwidth]{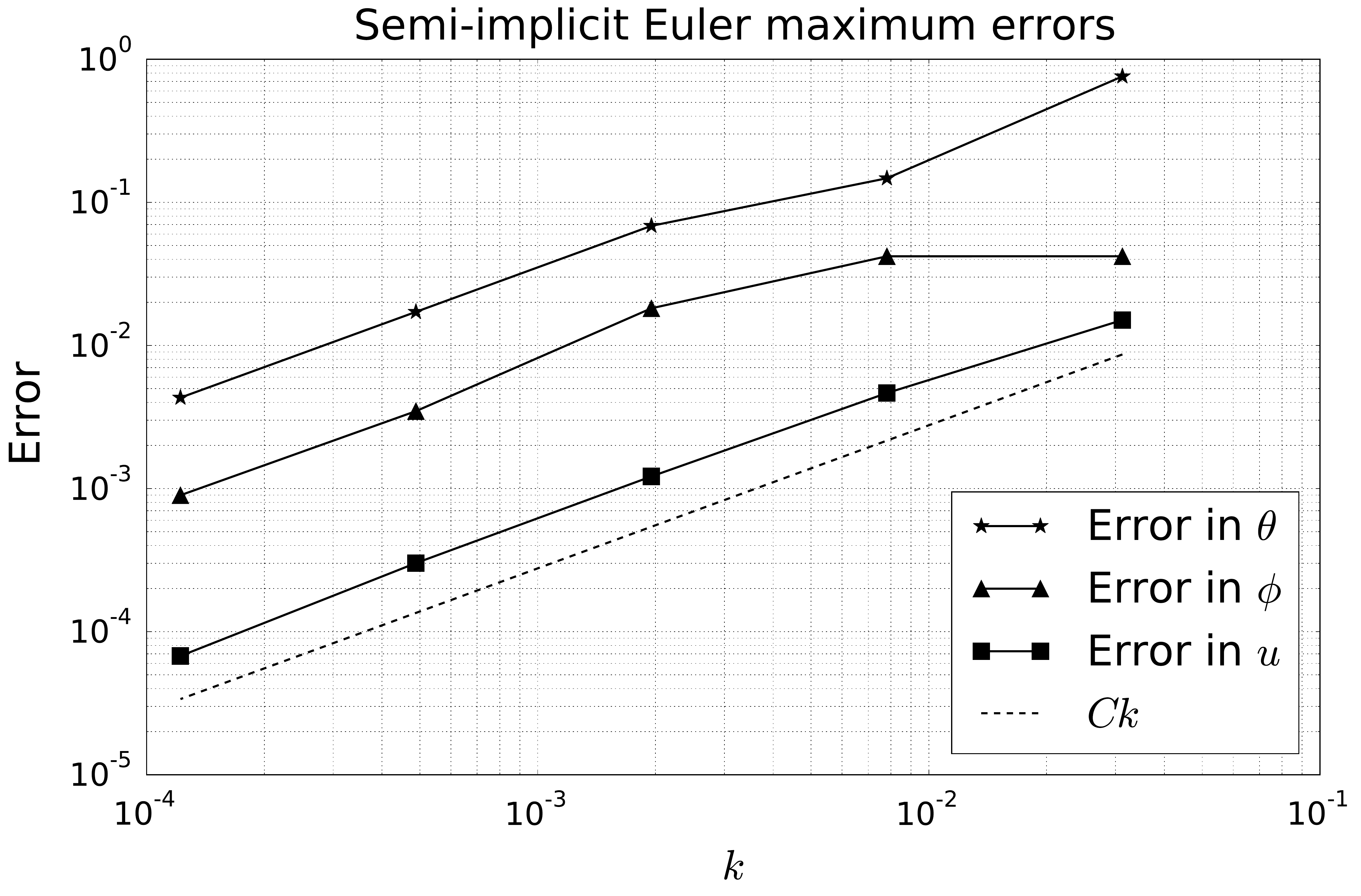}
\includegraphics[width=0.49\columnwidth]{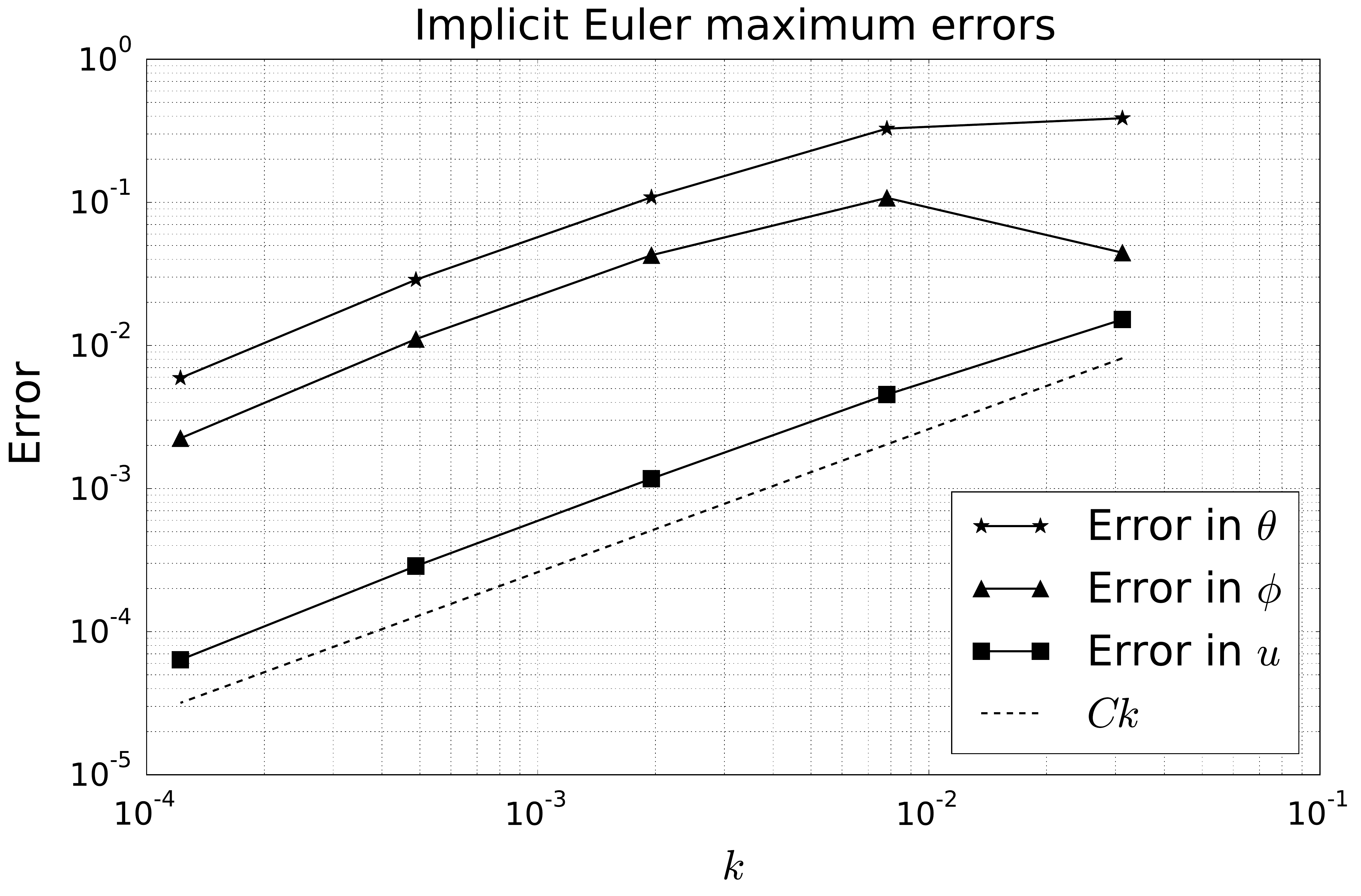}
\caption{The errors~\eqref{eq:errors} for the problem defined in Section~\ref{subsec:problem1}, computed by the semi-implicit method (left) and the implicit Euler method (right).}
\label{fig:problem1}
\end{figure}

\subsection{Problem  2} \label{subsec:problem2}
In the second experiment, we investigated the influence of the viscosity on the errors. To this end, we employ the same data as presented in Section~\ref{subsec:problem1} except for the viscosity operator which we set to
\begin{equation*}
  \bA = \gamma
  \begin{bmatrix}
    1 & 1 & 0 \\
    1 & 1 & 0 \\
    0 & 0 & 1 
  \end{bmatrix} 
\end{equation*}
 (in Voigt notation).
In this case, we used $N_x \in \{4, 8, 16, 32\}$ with $N_t = N_x^2 / 4 $ and took $N_x = 64$, $N_t = 1024$ for the reference approximation. We only used the semi-implicit scheme here. The first observation is that varying $\gamma$ has essentially no effect on the errors in $\theta$ and $\phi$. This is to be expected, as the influence of $u$ on $\theta$ is not so large. We therefore omit the plots of these errors, and instead present the error in $u$ for different values of $\gamma$ in Figure~\ref{fig:problem2}.

We observe that the error clearly increases as $\gamma$ is decreased, which is to be expected. Indeed, an inspection of the convergence proof indicates that the $L^2$-error should be inversely proportional to the coercivity constant of $\bA$, and thus also of $\gamma$. This is, however, in the worst case. In the current situation, Figure~\ref{fig:problem2} indicates that even $\gamma = 0$ would be perfectly feasible, though smaller step sizes might be necessary to enter the asymptotic regime.

\begin{figure}[t!]
\centering\includegraphics[width=0.60\columnwidth]{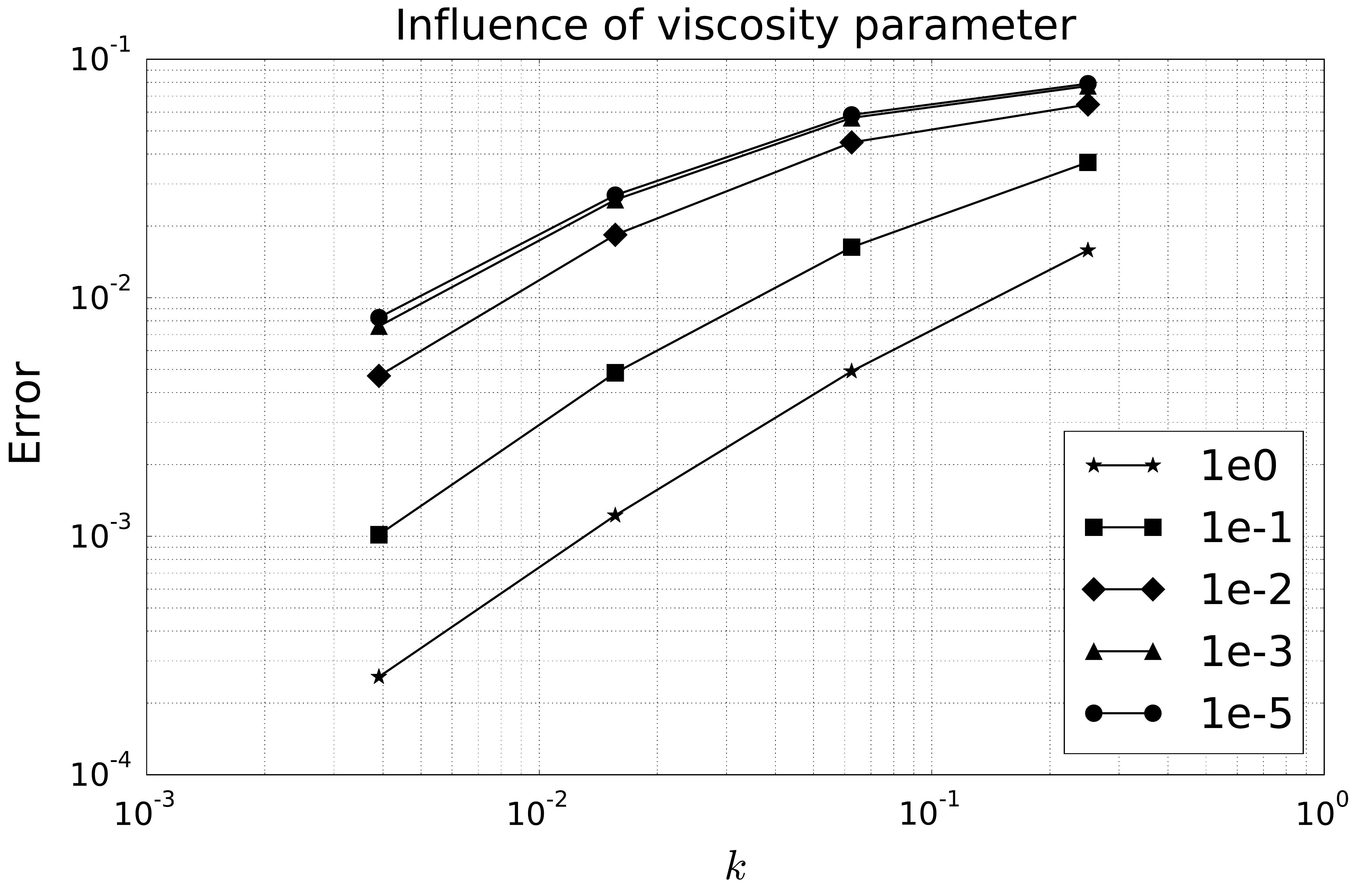}
\caption{The errors $\max_{1 \le n \le N_t} \norm{U_h^n - U_{\text{ref}}^n}_{\bL^2}$ for the problem defined in Section~\ref{subsec:problem2}, computed by the semi-implicit method. The different curves correspond to the different values of $\gamma \in \{10^0, 10^{-1}, 10^{-2}, 10^{-3}, 10^{-5} \}$.}
\label{fig:problem2}
\end{figure}

\subsection{Problem 3} \label{subsec:problem3}
For our last numerical experiment, we consider a 3D problem arising from an engineering application, inspired by~\cite{Henneken_etal2006} and~\cite{Holst_etal2010}. We let $\Omega$ be as in Figure~\ref{fig:thermistor_geometry}, which also shows a typical spatial tetrahedral discretization. This represents a micro-electro{-}mechanical system (MEMS) used for precise positioning on small scales. When an electric current is passed through the device from the upper-left connector to the lower-left connector, it heats up. This causes a deformation, which due to the asymmetrical design of the component makes the tip move downwards.

\begin{figure}[t!]
\centering\includegraphics[width=0.60\columnwidth]{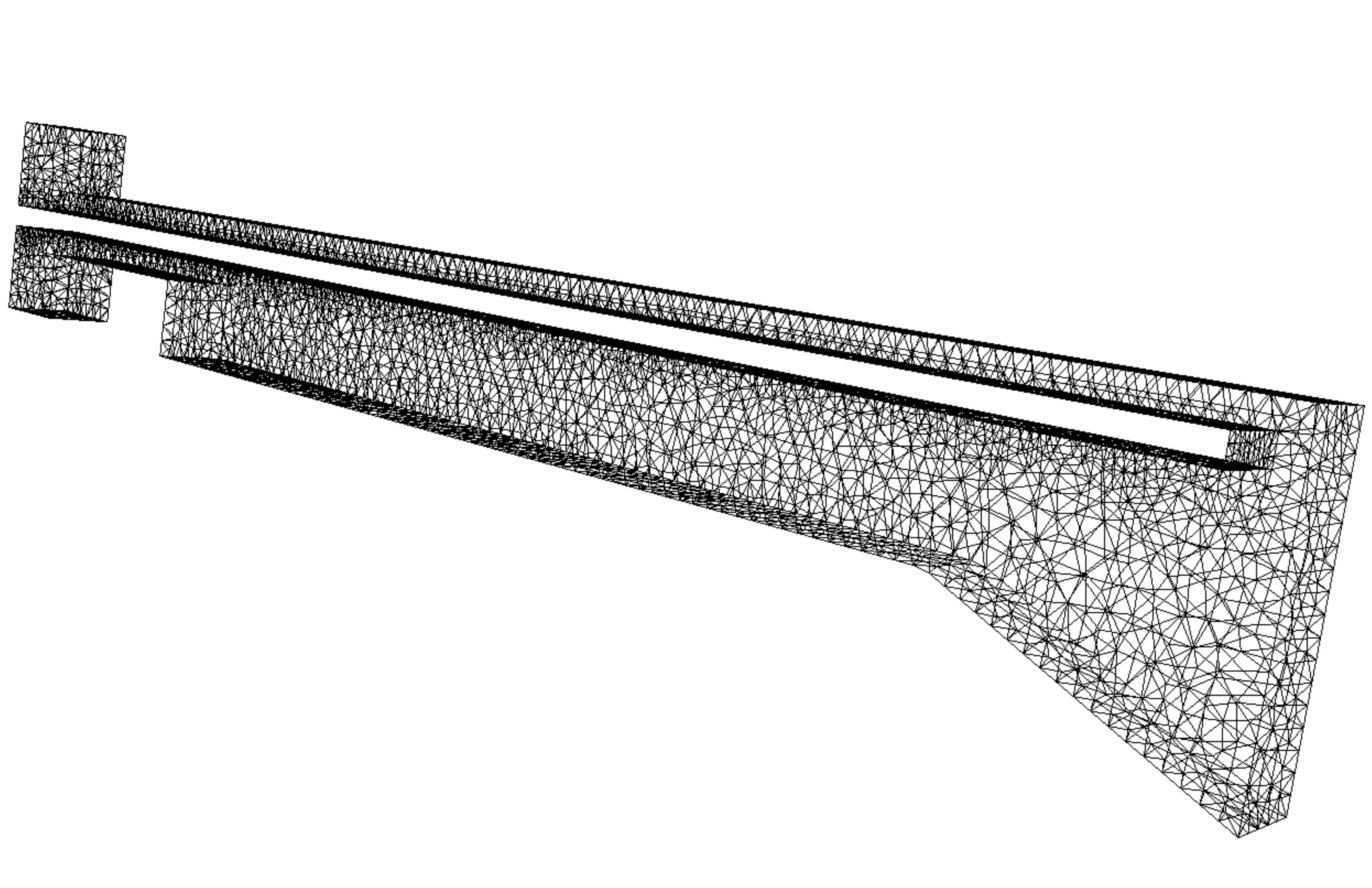}
\caption{A mesh for the problem described in Section~\ref{subsec:problem3}. The outer dimensions are $192 \times 27 \times \SI{9}{\micro \metre}$.}
\label{fig:thermistor_geometry}
\end{figure}

We employ homogeneous Neumann boundary conditions everywhere except for at the left-most edge of the two connectors. These correspond to the component being insulated and stress-free. On the left-most edge we choose the Dirichlet boundary conditions
\begin{equation*}
  \theta = 0, \quad \phi =
  \begin{cases}
    50, \quad z > 0 \\
    \hphantom{5}0, \quad z < 0
  \end{cases}, \quad \text{and} \quad 
u = v =
\begin{bmatrix}
  0\\
  0
\end{bmatrix},
\end{equation*}
corresponding to the component being clamped and having a potential difference applied between the two connectors. The equations, including physical constants, are
\begin{align}
  \label{eq:mems_T}
 \rho c \dot{\theta} &= \nabla \Big( \bK \nabla \theta \Big) + \sigma(\theta) | \nabla \phi|^2 -  \Theta_0 \bM : \strain{\dot{u}}, \\
  \label{eq:mems_phi}
   0 &= \nabla \cdot \big( \sigma(\theta) \nabla \phi \big), \\
  \label{eq:mems_stress}
\rho \ddot{u} &= \nabla \cdot \big( \bA\strain{\dot{u}} + \bB\strain{u} - \bM \theta \big) + f. 
\end{align}
Here, $\rho$ denotes the density, $c$ the specific heat capacity, $\bK = k\bI$ the thermal conductivity matrix,  $\bM = m\bI$ the thermal expansion matrix and $\sigma$ the electrical conductivity. 
Additionally, $\theta$ indicates the deviation from the ambient temperature $\Theta_0 = \SI{293.15}{\kelvin}$.

We choose the elasticity and viscosity operators to be given on Lam\'{e} parameter form:
\begin{equation*}
  \bA \strain{\dot{u}} = 2\eta_1 \strain{\dot{u}} + \eta_2 \trace \strain{\dot{u}}\bI
 \quad \text{and } \quad 
  \bB \strain{u} = 2\mu \strain{u} + \lambda \trace \strain{u} \bI,
\end{equation*}
where
\begin{equation*}
  \mu = \frac{E}{2(1+\nu)} \quad \text{and } \quad \lambda = \frac{E\nu}{(1+\nu)(1-2\nu)}
\end{equation*}
are given in terms of Poisson's ratio $\nu$ and Young's modulus $E$, and $\eta_1$, $\eta_2$ are corresponding viscosity parameters. Here, $\trace$ denotes the trace of a matrix; $\trace \tau = \tau_{11} + \tau_{22}$.

The parameter values we have used, similar to the material properties of silicon, are listed in Table~\ref{table:mems_parameters}. In addition to this, we take $f = [0,0,0]^T$ and choose the electrical conductivity as
\begin{equation*}
  \sigma(\theta) = \frac{\num{38e6}}{27} \bigg( 3000 + 550\Big(\frac{\pi}{2} + \arctan\frac{\theta_1 - 250}{250}\Big) \bigg)^{-1} \si{\siemens \per \meter},
\end{equation*}
where $\theta_1 = \Theta_0 + \theta$.

\begin{table}
  \centering
  \begin{tabular}{crc | crc}
    Parameter & Value & Unit & Parameter & Value & Unit \\
    \hline
    $\rho$      & \num{2.329e3} & \si{\kg \per \cubic \meter } & 
    $c$         & \num{0.7046e3}  & \si{\joule \per \kg \per \kelvin} \\ 
    $k$         & \num{158} & \si{\watt \per \meter \per \kelvin} &
    $m$         & \num{1.3265e5} & \si{\newton \per \square \meter \per \kelvin} \\
    $\nu$  & \num{0.01} & \si{1} &
    $E$  & \num{150e7}& \si{\newton \per \square \meter} \\
    $\eta_1$  & \num{1e6} & \si{\newton \second \per \square \meter} &
    $\eta_2$  & \num{5e6} & \si{\newton \second \per \square \meter} \\
  \end{tabular}
  \caption{Parameter values utilized in Problem 3.}
  \label{table:mems_parameters}
\end{table}

We solve the problem until the time $\tend = 0.1$ using the semi-implicit method for different spatial and temporal discretizations. The maximum sizes $h$ of the tetrahedrons that were used and the corresponding number of vertices are listed in Table~\ref{table:problem3_errors}. The time steps were again taken proportional to $h^2$ but modified slightly to yield an integer number of steps. Since the temporal grids thus generated are not refinements of each other, we measured the error as the sum of the errors at only the points $t_j = j\cdot 10^{-2}$ for $j = 1, \ldots, 10$. These errors are listed in Table~\ref{table:problem3_errors}, and also plotted in Figure~\ref{fig:MEMS_errors}. While we cannot apply Theorem~\ref{theorem:full_error} directly, due to the mixed boundary conditions and the non-convexity of the domain, we observe that we still acquire almost $\ordo(h^2+k)$ convergence. The curves wiggle because $k = Ch^2$ is only approximately satisfied, and the different magnitudes of the errors reflect the relative sizes of the solution components. The larger error in $\theta$ for the coarsest mesh indicates that it violates either the $k < k_0$ or $h < h_0$ mesh size limitations.
\begin{table}
  \centering
  \begin{tabular}{S|S|S|S|S|S}
    h & k & {Vertices} & {Error in $\theta$} & {Error in $\phi$} & {Error in $u$}  \\
\hline
4.82e-06 & 5.000e-3 &  5219 & 1.4445e-01 &  1.4212e-01 &  9.6544e-01 \\
3.56e-06 & 3.333e-3 &  7510 & 2.7433e-03 &  2.1775e-03 &  1.9998e-01 \\
2.80e-06 & 2.000e-3 & 11783 & 1.6006e-03 &  1.2654e-03 &  1.2388e-01 \\
2.39e-06 & 1.667e-3 & 18719 & 1.2219e-03 &  9.5176e-04 &  8.8905e-02 \\
2.01e-06 & 1.111e-3 & 28473 & 7.7211e-04 &  6.0036e-04 &  5.0397e-02 \\
1.33e-06 & 5.263e-4 & 85310 & {-} & {-} & {-}
  \end{tabular} 
  \caption{Spatial and temporal discretizations parameters as well as maximal errors for the MEMS problem (Section~\ref{subsec:problem3}) at the time points $t_j = j\cdot 10^{-2}$ for $j = 1, \ldots, 10$. The last line corresponds to the reference approximation. }
  \label{table:problem3_errors}
\end{table}

\begin{figure}[t!]
\centering
\includegraphics[width=0.6\columnwidth]{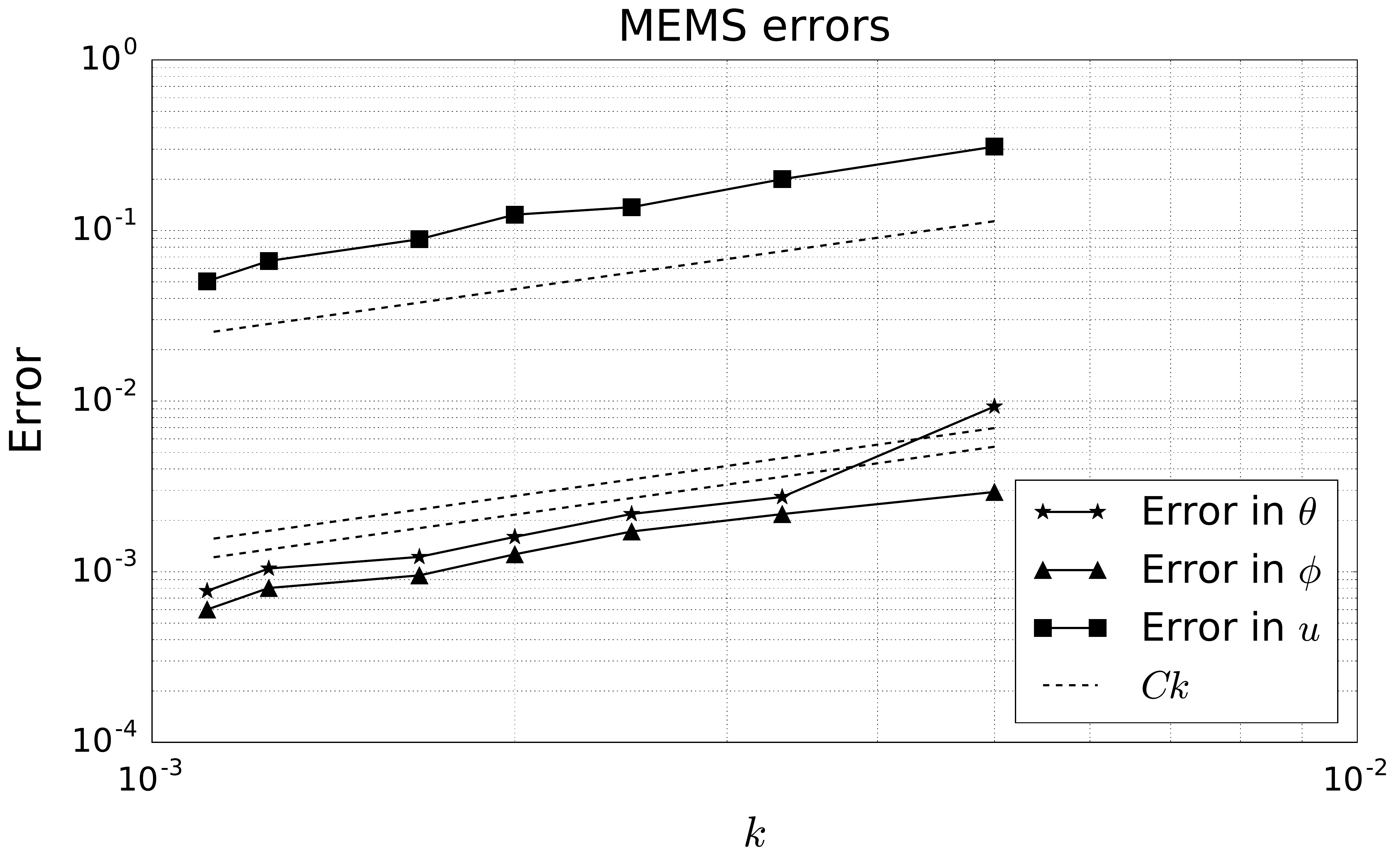}
\caption{Maximal errors at the time points $t_j = j\cdot 10^{-2}$ for $j = 1, \ldots, 10$ for the MEMS problem defined in Section~\ref{subsec:problem3}. The lines wiggle because $k = Ch^2$ is only approximately satisfied.}
\label{fig:MEMS_errors}
\end{figure}

Finally, Figure~\ref{fig:MEMS_solutions} shows the approximations $\Theta_h^N$, $\Phi_h^N$ and $U_h^N$ at $\tend$, viewed from the side. At this point in time the solutions have just reached their steady state, and we see that the body deforms in the expected fashion.

\begin{figure}[t!]
\centering
\includegraphics[width=0.32\columnwidth]{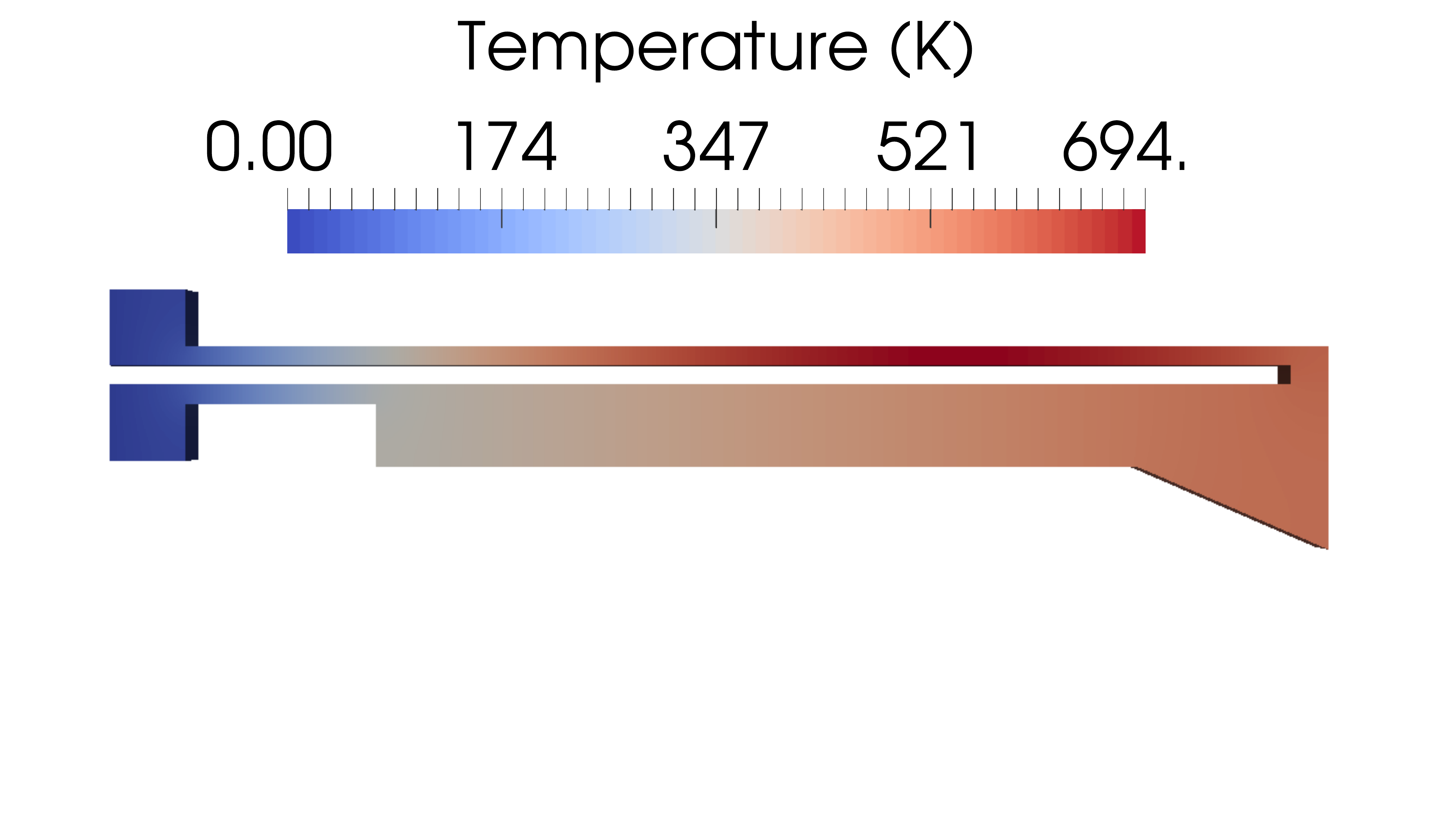}
\includegraphics[width=0.32\columnwidth]{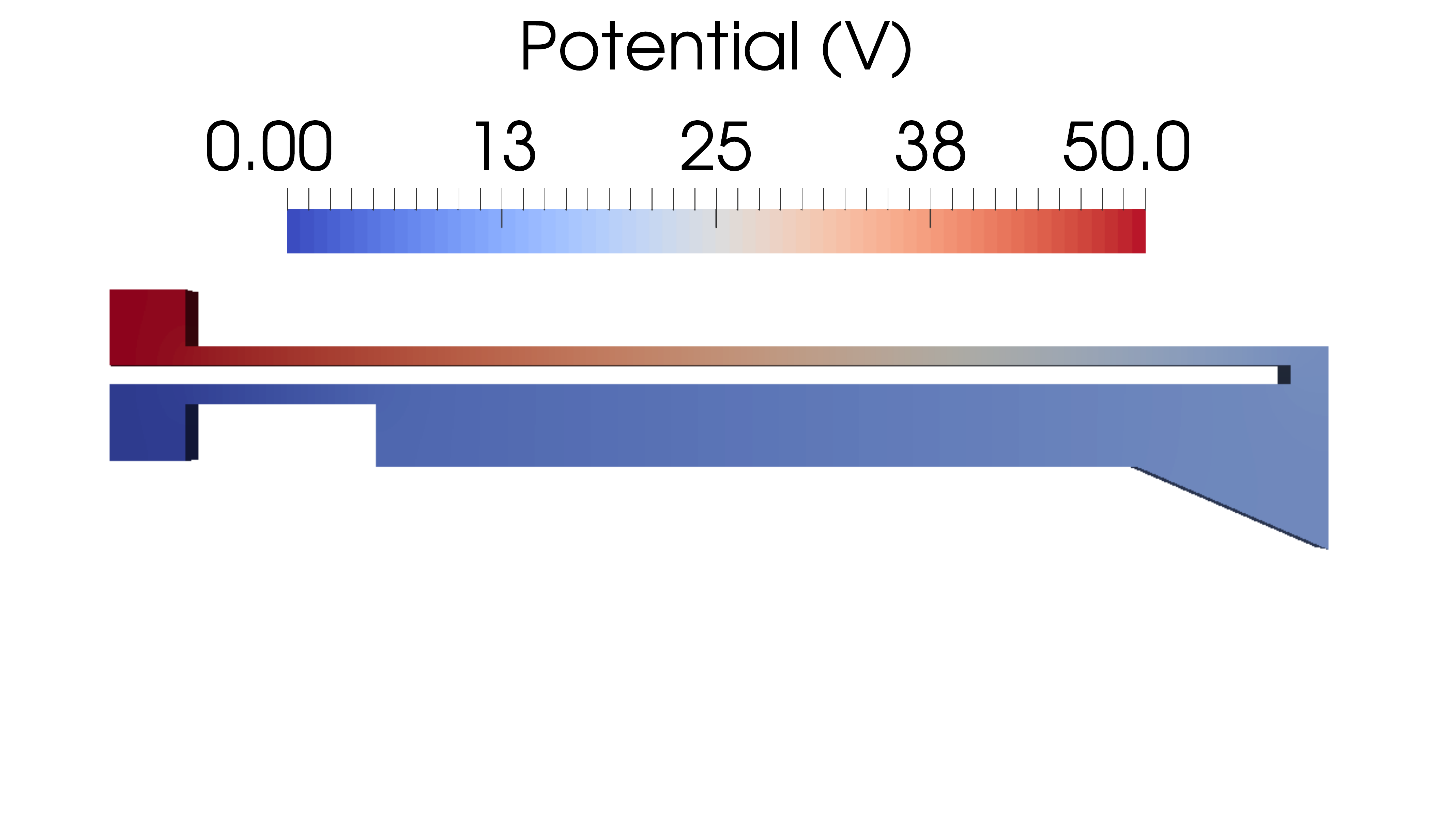}
\includegraphics[width=0.32\columnwidth]{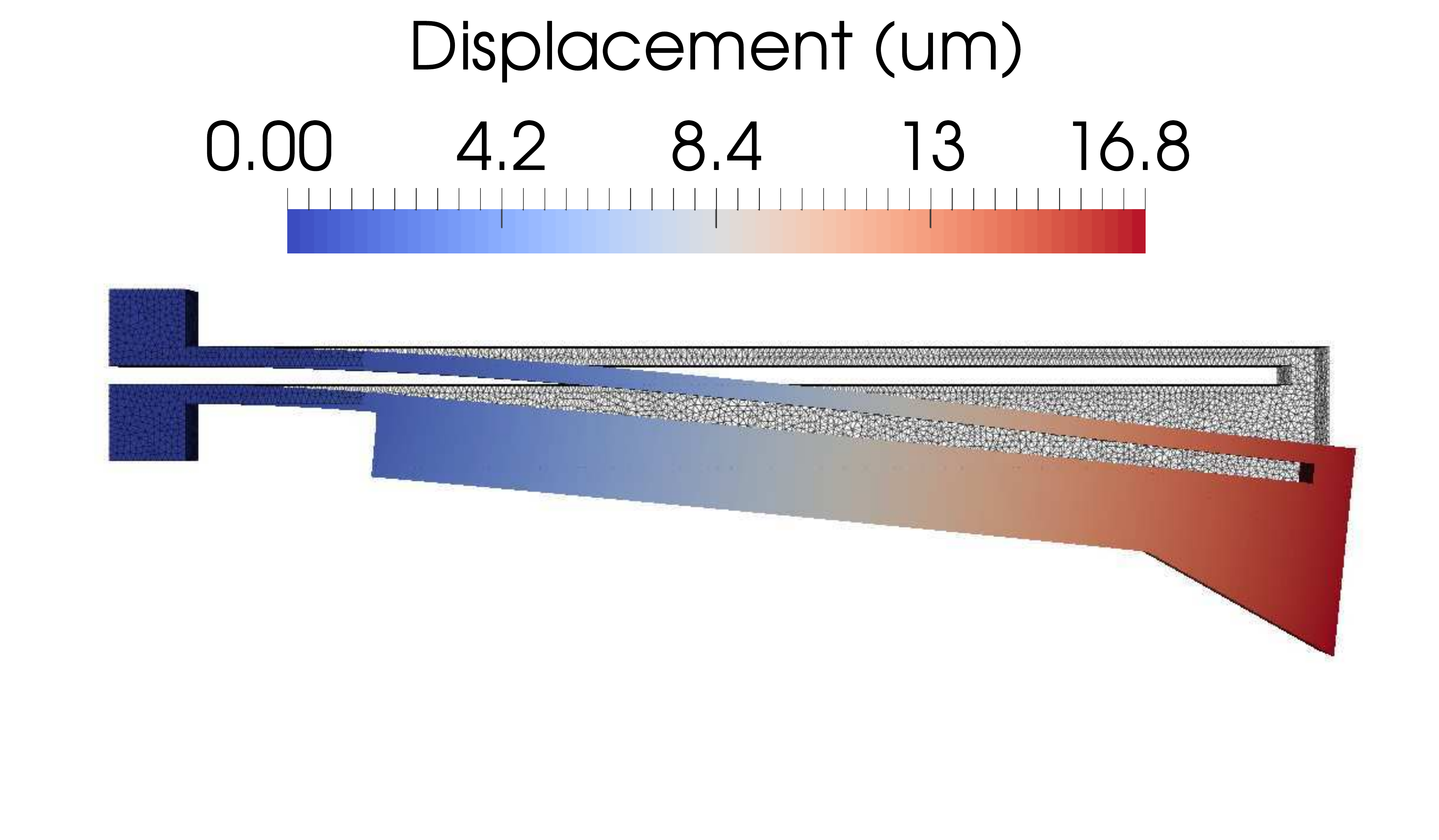}
\caption{The approximation to the solution of the problem defined in Section~\ref{subsec:problem3} at $t = \tend$ and with the finest spatial and temporal discretization. In the right-most plot, the grid has been deformed according to the computed displacement and then super-imposed over the original mesh to illustrate the deformation. We note that the grid is never deformed in the actual computations. (This figure is in color in the electronic version of the article.)}
\label{fig:MEMS_solutions}
\end{figure}

\FloatBarrier

\section{Conclusions and outlook} \label{sec:conclusions}
We have presented a fully discrete numerical method for the fully coupled thermoviscoelastic thermistor problem~\eqref{eq:main_T}--\eqref{eq:main_stress} and proved optimal convergence orders in both space and time. These theoretical results are validated by experimental results. 

We reiterate that mixed boundary conditions and re-entrant corners might lead to order reductions. In that case an adaptive mesh refinement strategy may be used, which requires a good a posteriori error estimate. It is possible that the ideas in~\cite{AllegrettoYan2006} regarding this can be extended to the present, deformable case.

As illustrated by Section~\ref{subsec:problem3}, a typical thermistor is not convex, so a further item that could be improved in the analysis is therefore the shape of the computational domain itself. In this direction we note that the stationary version of the non-deformable problem has been studied in~\cite{Holst_etal2010,JensenMalqvist2013} for very general domains. It is our ambition to extend these ideas to the time-dependent deformable case in the future.

Finally, a similar analysis would apply also for higher-order methods both in time and space. See e.g.~\cite{LiGaoSun2014} for a Crank-Nicolson-approach to the non-deformable Joule heating problem. However, such an analysis would require extra regularity assumptions that are unfeasible in real-world engineering applications.

\bibliographystyle{spmpsci}
\bibliography{references}

\begin{thebibliography}{10}
\providecommand{\url}[1]{{#1}}
\providecommand{\urlprefix}{URL }
\expandafter\ifx\csname urlstyle\endcsname\relax
  \providecommand{\doi}[1]{DOI~\discretionary{}{}{}#1}\else
  \providecommand{\doi}{DOI~\discretionary{}{}{}\begingroup
  \urlstyle{rm}\Url}\fi

\bibitem{AkrivisLarsson2005}
Akrivis, G., Larsson, S.: Linearly implicit finite element methods for the
  time-dependent {J}oule heating problem.
\newblock BIT \textbf{45}(3), 429--442 (2005).
\newblock \doi{10.1007/s10543-005-0008-1}

\bibitem{AllegrettoXie1992}
Allegretto, W., Xie, H.: Existence of solutions for the time-dependent
  thermistor equations.
\newblock IMA J. Appl. Math. \textbf{48}(3), 271--281 (1992).
\newblock \doi{10.1093/imamat/48.3.271}

\bibitem{AllegrettoYan2006}
Allegretto, W., Yan, N.: A posteriori error analysis for {FEM} of thermistor
  problems.
\newblock Int. J. Numer. Anal. Model. \textbf{3}(4), 413--436 (2006)

\bibitem{AlnaesBlechtaEtAl2015}
Aln{\ae}s, M.S., Blechta, J., Hake, J., Johansson, A., Kehlet, B., Logg, A.,
  Richardson, C., Ring, J., Rognes, M.E., Wells, G.N.: The {FEniCS} project
  version 1.5.
\newblock Archive of Numerical Software \textbf{3}(100) (2015).
\newblock \doi{10.11588/ans.2015.100.20553}

\bibitem{AntontsevChipot1994}
Antontsev, S.N., Chipot, M.: The thermistor problem: existence, smoothness
  uniqueness, blowup.
\newblock SIAM J. Math. Anal. \textbf{25}(4), 1128--1156 (1994).
\newblock \doi{10.1137/S0036141092233482}

\bibitem{Chen1994_2}
Chen, X.: Existence and regularity of solutions of a nonlinear nonuniformly
  elliptic system arising from a thermistor problem.
\newblock J. Partial Differential Equations \textbf{7}(1), 19--34 (1994)

\bibitem{Ciarlet2002}
Ciarlet, P.G.: The finite element method for elliptic problems, \emph{Classics
  in Applied Mathematics}, vol.~40.
\newblock Society for Industrial and Applied Mathematics (SIAM), Philadelphia,
  PA (2002).
\newblock \doi{10.1137/1.9780898719208}.
\newblock Reprint of the 1978 original [North-Holland, Amsterdam; MR0520174 (58
  \#25001)]

\bibitem{Cimatti1989}
Cimatti, G.: Remark on existence and uniqueness for the thermistor problem
  under mixed boundary conditions.
\newblock Quart. Appl. Math. \textbf{47}(1), 117--121 (1989)

\bibitem{Cimatti1992}
Cimatti, G.: Existence of weak solutions for the nonstationary problem of the
  {Joule} heating of a conductor.
\newblock Ann. Mat. Pura Appl. (4) \textbf{162}, 33--42 (1992).
\newblock \doi{10.1007/BF01759998}

\bibitem{DuvautLions1972}
Duvaut, G., Lions, J.L.: Inequalities in mechanics and physics.
\newblock Springer, Berlin (1976)

\bibitem{ElliottLarsson1995}
Elliott, C.M., Larsson, S.: A finite element model for the time-dependent
  {J}oule heating problem.
\newblock Math. Comp. \textbf{64}(212), 1433--1453 (1995).
\newblock \doi{10.2307/2153363}

\bibitem{Fernandez2006}
Fern{\'a}ndez, J.R.: Numerical analysis of the quasistatic thermoviscoelastic
  thermistor problem.
\newblock M2AN Math. Model. Numer. Anal. \textbf{40}(2), 353--366 (2006).
\newblock \doi{10.1051/m2an:2006016}

\bibitem{FernandezKuttler2010_2}
Fern{\'a}ndez, J.R., Kuttler, K.L.: A dynamic thermoviscoelastic problem: an
  existence and uniqueness result.
\newblock Nonlinear Anal. \textbf{72}(11), 4124--4135 (2010).
\newblock \doi{10.1016/j.na.2010.01.044}

\bibitem{FernandezKuttler2010}
Fern{\'a}ndez, J.R., Kuttler, K.L.: A dynamic thermoviscoelastic problem:
  numerical analysis and computational experiments.
\newblock Quart. J. Mech. Appl. Math. \textbf{63}(3), 295--314 (2010).
\newblock \doi{10.1093/qjmam/hbq012}

\bibitem{Grisvard1985}
Grisvard, P.: Elliptic problems in nonsmooth domains, \emph{Monographs and
  Studies in Mathematics}, vol.~24.
\newblock Pitman (Advanced Publishing Program), Boston, MA (1985)

\bibitem{Henneken_etal2006}
Henneken, V.A., Tichem, M., Sarro, P.M.: In-package {MEMS}-based thermal
  actuators for micro-assembly.
\newblock J. Micromech. Microeng. \textbf{16}, 107--115 (2006).
\newblock \doi{10.1088/0960-1317/16/6/S17}

\bibitem{Holst_etal2010}
Holst, M.J., Larson, M.G., M{\aa}lqvist, A., S{\"o}derlund, R.: Convergence
  analysis of finite element approximations of the {J}oule heating problem in
  three spatial dimensions.
\newblock BIT \textbf{50}(4), 781--795 (2010).
\newblock \doi{10.1007/s10543-010-0287-z}

\bibitem{Howison_etal1993}
Howison, S.D., Rodrigues, J.F., Shillor, M.: Stationary solutions to the
  thermistor problem.
\newblock J. Math. Anal. Appl. \textbf{174}(2), 573--588 (1993).
\newblock \doi{10.1006/jmaa.1993.1142}

\bibitem{JensenMalqvist2013}
Jensen, M., M{\aa}lqvist, A.: Finite element convergence for the {J}oule
  heating problem with mixed boundary conditions.
\newblock BIT \textbf{53}(2), 475--496 (2013)

\bibitem{Kuttler_etal2008}
Kuttler, K.L., Shillor, M., Fern{\'a}ndez, J.R.: Existence for the
  thermoviscoelastic thermistor problem.
\newblock Differ. Equ. Dyn. Syst. \textbf{16}(4), 309--332 (2008).
\newblock \doi{10.1007/s12591-008-0017-z}

\bibitem{LarssonThomeeWahlbin1991}
Larsson, S., Thom\'ee, V., Wahlbin, L.B.: Finite-element methods for a strongly
  damped wave equation.
\newblock IMA J. Numer. Anal. \textbf{11}(1), 115--142 (1991).
\newblock \doi{10.1093/imanum/11.1.115}

\bibitem{LiGaoSun2014}
Li, B., Gao, H., Sun, W.: Unconditionally optimal error estimates of a
  {C}rank-{N}icolson {G}alerkin method for the nonlinear thermistor equations.
\newblock SIAM J. Numer. Anal. \textbf{52}(2), 933--954 (2014).
\newblock \doi{10.1137/120892465}

\bibitem{LiSun2013}
Li, B., Sun, W.: Error analysis of linearized semi-implicit {G}alerkin finite
  element methods for nonlinear parabolic equations.
\newblock Int. J. Numer. Anal. Model. \textbf{10}(3), 622--633 (2013)

\bibitem{LiYang2015}
Li, B., Yang, C.: Uniform {BMO} estimate of parabolic equations and global
  well-posedness of the thermistor problem.
\newblock Forum Math. Sigma \textbf{3}, e26 (2015).
\newblock \doi{10.1017/fms.2015.29}

\bibitem{LinThomeeWahlbin1991}
Lin, Y.P., Thom{\'e}e, V., Wahlbin, L.B.: Ritz-{V}olterra projections to
  finite-element spaces and applications to integrodifferential and related
  equations.
\newblock SIAM J. Numer. Anal. \textbf{28}(4), 1047--1070 (1991).
\newblock \doi{10.1137/0728056}

\bibitem{LoggMardalEtAl2012a}
Logg, A., Mardal, K.A., Wells, G.N., et~al.: Automated Solution of Differential
  Equations by the Finite Element Method.
\newblock Springer, Berlin (2012).
\newblock \doi{10.1007/978-3-642-23099-8}

\bibitem{Nitsche1981}
Nitsche, J.A.: On {K}orn's second inequality.
\newblock RAIRO Anal. Num\'er. \textbf{15}(3), 237--248 (1981)

\bibitem{Thomee2006}
Thom{\'e}e, V.: Galerkin finite element methods for parabolic problems,
  \emph{Springer Series in Computational Mathematics}, vol.~25, second edn.
\newblock Springer, Berlin (2006)

\bibitem{ThomeeZhang1989}
Thom{\'e}e, V., Zhang, N.Y.: Error estimates for semidiscrete finite element
  methods for parabolic integro-differential equations.
\newblock Math. Comp. \textbf{53}(187), 121--139 (1989).
\newblock \doi{10.2307/2008352}

\bibitem{WuXu2006}
Wu, X., Xu, X.: Existence for the thermoelastic thermistor problem.
\newblock J. Math. Anal. Appl. \textbf{319}(1), 124--138 (2006).
\newblock \doi{10.1016/j.jmaa.2006.01.076}

\bibitem{YuanLiu1994}
Yuan, G.W., Liu, Z.H.: Existence and uniqueness of the {$C^\alpha$} solution
  for the thermistor problem with mixed boundary value.
\newblock SIAM J. Math. Anal. \textbf{25}(4), 1157--1166 (1994).
\newblock \doi{10.1137/S0036141092237893}

\end{thebibliography}

\end{document}